\setlist[enumerate,1]{label={\roman*)}}
\tikzset{> =stealth}
\newcommand{\addQEDstyle}[2]{\AtBeginEnvironment{#1}{\pushQED{\qed}\renewcommand{\qedsymbol}{#2}}\AtEndEnvironment{#1}{\popQED}}
\theoremstyle{plain}
\newtheorem{theorem}{Theorem}[section]
\newtheorem{lemma}[theorem]{Lemma}
\newtheorem{proposition}[theorem]{Proposition}
\theoremstyle{definition}
\newtheorem{definition}[theorem]{Definition}
\theoremstyle{remark}
\newtheorem{remark}[theorem]{Remark}
\renewcommand{\epsilon}{\varepsilon}
\renewcommand{\phi}{\varphi}
\mathchardef\mhyphen="2D
\renewcommand{\land}{\mathrel{\wedge}}
\renewcommand{\lor}{\mathrel{\vee}}
\newcommand{\N}{\mathbb{N}}
\newcommand{\Q}{\mathbb{Q}}
\newcommand{\R}{\mathbb{R}}
\def\U{\mathcal{U}}
\newcommand{\V}{\mathcal{V}}
\newcommand{\B}{\mathcal{B}}
\newcommand{\Sup}{\mathbf{Sup}}
\newcommand{\Frm}{\mathbf{Frm}}
\newcommand{\Loc}{\mathbf{Loc}}
\newcommand{\PreUnifLoc}{\mathbf{PUnifLoc}}
\newcommand{\CUnifLoc}{\mathbf{CUnifLoc}}
\newcommand{\op}{{^\mathrm{\hspace{0.5pt}op}}}
\renewcommand{\O}{\mathcal{O}}
\newcommand{\PL}{\mathrm{P_L}}
\newcommand{\Cvar}{\mathcal{C}}
\newcommand{\st}{\mathrm{st}}
\newcommand{\ModCauchy}{\mathrm{Cauchy}}
\def\slashedrightarrow{\mathrel{\mkern3mu  \vcenter{\hbox{$\scriptscriptstyle+$}}\mkern-12mu{\to}}}
\DeclarePairedDelimiter\abs{\lvert}{\rvert}
\title{Sequences suffice for pointfree uniform completions}
\author[G. Manuell]{Graham Manuell}
\address{Department of Mathematical Sciences, Stellenbosch University, South Africa}
\email{graham@manuell.me}
\thanks{The author acknowledges some financial support from the Centre for Mathematics of the University of Coimbra (UIDB/00324/2020, funded by the Portuguese Government through FCT/MCTES)}
\date{August 2025}
\subjclass[2020]{06D22, 54E15, 54A20, 54D35}
\keywords{uniform frame, sequential completeness}
\begin{document}

\begin{abstract}
 Completions of metric spaces are usually constructed using Cauchy sequences. However, this does not work for general uniform spaces, where Cauchy filters or nets must be used instead. The situation in pointfree topology is more straightforward: the correct completion of uniform locales can indeed be obtained as a quotient of a locale of Cauchy sequences.
\end{abstract}

\maketitle
\thispagestyle{empty}

\setcounter{section}{-1}
\section{Introduction}

Uniform spaces provide a general setting in which to discuss uniform continuity and completeness.
In particular, the completion of uniform spaces generalises the previously known completions for metric spaces and topological groups.

However, while metric spaces can be completed via Cauchy sequences, completions of uniform spaces require the use of either nets or filters. For example, the ordinal $\omega_1$ with the order topology is sequentially compact, but not compact. Using the uniformity generated by all finite open covers, we then quickly see that every Cauchy sequence in $\omega_1$ converges, but it is not complete.

Pointfree topology studies topological spaces by means of their lattices of open sets. This allows for the application of algebraic techniques to topology and a very well-behaved constructive theory. Even classically, the pointfree theory is often better behaved than the point-set one. In many ways, the theory of locales parallels that of (quasi-)\allowbreak Polish spaces, but without countability restrictions (see \cite{heckmann2015,chen2020}).

Results in pointfree topology tend to be stable under forcing. Since forcing allows us to make any set countable, we might hope that a pointfree approach to completion would allow us to bypass nets and filters and use sequences for completions of arbitrary uniform locales. On the other hand, every uniform space can be viewed as a uniform locale, so if sequences are not sufficient for spaces, it would be surprising that they somehow work for the more general class of uniform locales. Nonetheless, we will see that this is indeed the case. Thus, we might view the failure of sequences to describe uniform completions to be a pathology of the point-set setting that is rectified by the pointfree approach.

\section{Background}

In this section we will give a brief background to the concepts necessary to understanding this paper. For a more detailed introduction to pointfree topology (and uniform locales in particular) see \cite{PicadoPultr}.
The results in the current paper are constructively valid, but we will not dwell on this in our presentation. An account of the constructive theory of uniform locales can be found in \cite{manuell2024uniform}.

\subsection{Frames and locales}

Frames generalise the open set lattices of topological spaces. A frame is a complete lattice satisfying the distributive law $a \wedge \bigvee_\alpha b_\alpha = \bigvee_\alpha a \wedge b_\alpha$. A \emph{frame homomorphism} is a function between frames which preserves finite meets and arbitrary joins. We denote the category of frames and frame homomorphisms by $\Frm$. A continuous map between topological spaces induces a frame homomorphism \emph{in the reverse direction} between the corresponding frames of open sets by taking preimages. In order to make the morphisms go in the same direction as continuous maps we set $\Loc = \Frm\op$ and call this the category of \emph{locales} and locale morphisms.

We will think of locales as `spaces'. Thus, we will distinguish notationally between a locale $X$ and its corresponding `frame of opens', $\O X$. Of course, formally $X$ and $\O X$ are the same thing, but while an element $u \in \O X$ corresponds to an open of $X$, we want to interpret elements ``$x \in X$'' as being generalised points of the locale in the sense of categorical logic.
If $f\colon X \to Y$ is a locale morphism we write $f^*\colon \O Y \to \O X$ for the corresponding frame homomorphism to evoke the idea of the preimage $f^{-1}$ of a continuous map $f$.

Frames can be presented by generators and relations, just like more conventional algebraic structures. We write $\langle G \mid R\rangle$ for the frame freely generated by the generators $G$ subject to the relation $R$, which consist of formal (in)equalities between elements of the free frame on $G$.
Frame presentations can be understood from a logical perspective as giving an axiomatisation for a geometric theory whose models are the points of the corresponding locale.
Propositional geometric logic has finitary conjunctions and infinitary disjunctions, which are interpreted as finite meets and arbitrary joins in a frame. The generators can be understood as basic propositions, while the relations describe implications between geometric formulae.

As an example let us consider a presentation for the locale of real numbers. Recall that a real number can be described by a \emph{Dedekind cut} --- a pair $(L,U)$ of subsets of $\Q$ satisfying a number of axioms. Here $L$ is understood to be the set of rationals that are smaller than the real in question and $U$ gives the set of rationals larger than the real question. Such a pair can be specified by saying which rationals lie in $L$ and $U$. For each $q \in \Q$ we have generators / basic propositions $[q \in L]$ and $[q \in U]$. These satisfy the following axioms (expressed as \emph{sequents}).
\begin{displaymath}
\begin{array}{r@{\hspace{1.5ex}}c@{\hspace{1.5ex}}l@{\quad}@{}l@{\qquad\quad}r@{}}
  {[q \in L]} &\vdash& {[p \in L]} & \text{ for $p \le q$} & \text{($L$ downward closed)} \\
  {[q \in L]} &\vdash& \bigvee_{p > q} {[p \in L]} & \text{ for $q \in \Q$} & \text{($L$ rounded)} \\
  {\top} &\vdash& \bigvee_{q \in \Q} {[q \in L]} && \text{($L$ inhabited)} \\
  {[p \in U]} &\vdash& {[q \in U]} & \text{ for $p \le q$} & \text{($U$ upward closed)} \\
  {[q \in U]} &\vdash& \bigvee_{p < q} {[p \in U]} & \text{ for $q \in \Q$} & \text{($U$ rounded)} \\
  {\top} &\vdash& \bigvee_{q \in \Q} {[q \in U]} && \text{($U$ inhabited)} \\
  {[p \in L]} \land {[q \in U]} &\vdash& \bot & \text{ for $p \ge q$} & \text{($L$ and $U$ disjoint)} \\
  {\top} &\vdash& {[p \in L]} \lor {[q \in U]} & \text{ for $p < q$} & \text{(locatedness)}
\end{array}
\end{displaymath}
Here the turnstile $\vdash$ can be understood as implication or $\le$, while $\top$ and $\bot$ are true and false (or $1$ and $0$), respectively.
These axioms become the relations in our presentation and the resulting locale agrees with the usual space of real numbers $\R$.
Note that we merely needed to axiomatise the notion of Dedekind cut and we obtained not only the correct points, but also the correct topology.

The product of two locales $X \times Y$ is given by a presentation with generators $\iota_1(u)$ for each $u \in \O X$ and $\iota_2(v)$ for each $v \in \O Y$, and relations enforcing all the relations that already hold in $\O X$ and $\O Y$. We write $u \oplus v = \iota_1(u) \wedge \iota_2(v)$; these are the basic open rectangles of the product.

An embedding of locales $f\colon X \hookrightarrow Y$ is a locale morphism for which $f^*$ is surjective. Equivalence classes of embeddings are called \emph{sublocales}. Sublocales are obtained by adding additional axioms to the geometric theory while leaving the generators alone.
A sublocale is \emph{closed} if it is obtained by adding only axioms of the from $\bigwedge_i g_i \vdash \bot$.

\subsection{Uniform structures}

Uniform locales can be defined by specifying families of \emph{uniform covers}, which consist of opens that we can think of as being `of a similar size'.
A \emph{cover} on a frame $\O X$ is a subset $C \subseteq \O X$ for which $\bigvee C = 1$. A \emph{strong} cover is a cover in which every element is nonzero.
Covers are preordered by refinement: $C_1 \le C_2$ if for every $u \in C_1$ there is a $v \in C_2$ such that $u \le v$.

Given a cover $U$, the \emph{star} of an open $a \in \O X$ is given by \[\st(a,U) = \bigvee \{u \in U \mid a \between u\},\]
where $a \between u$ means that $a \wedge u > 0$. We understand this as making $a$ `a little bigger' (to a degree controlled by the cover).
Sometimes it is useful to write $a\vartriangleleft_U b$ when $\st(a,U) \le b$.
The star of a cover $U$ is defined to be \[U^\star = \{ \st(u, U) \mid u \in C \}.\]

\begin{definition}
 A \emph{pre-uniform locale} is a locale\footnote{Constructively, we additionally assume $X$ is \emph{overt}, as in \cite{manuell2024uniform}. Classically, this holds automatically.} $X$ equipped with is a filter $\U$ of strong covers on $\O X$ (with respect to refinement) such that for every $U \in \U$ there is a $V \in \U$ with $V^\star \le U$.
 Such a filter $\U$ is called a \emph{uniformity}.
 
 A \emph{morphism of pre-uniform locales} $f\colon (X, \U) \to (Y,\V)$ is a morphism of locales $f\colon X \to Y$ such that $f^*[V] \in \U$ for all $V \in \V$. We write $\PreUnifLoc$ for the category of pre-uniform locales.
\end{definition}

An important class of pre-uniform locales is those arising from a metric $d\colon X \times X \to \R_{\ge 0}$. For each $\epsilon > 0$, there is a uniform cover consisting of the $u \in \O X$ of diameter less than $\epsilon$ --- i.e.\ those such that $u \oplus u \le d^*([0,\epsilon))$. The filter generated by these covers is the \emph{metric uniformity} on $X$.

We note that a \emph{uniform space} is simply a pre-uniform structure on a discrete locale. Such a uniformity also induces a topology coarser than the discrete topology on the underlying set of the uniform space. Something similar happens in the pointfree setting more generally.

If $(X,\U)$ is a pre-uniform locale, we define the uniformly below relation on $\O X$ by
\[a\vartriangleleft b \iff \exists U \in \U.\ a\vartriangleleft_U b \iff \exists U \in \U.\ \st(a,U) \le b.\]
This relation is compatible with the order, stable under finite meets and joins, transitive and interpolative (for the last condition, we in particular have $a \vartriangleleft_U b \implies a \vartriangleleft_V \st(a,V) \vartriangleleft_V b$ when $V^\star \le U$).
We obtain a subframe of $\O X$ consisting of the opens $u$ such that $u = \bigvee_{v \vartriangleleft u} v$.
We sometimes want this induced subframe to agree with the intrinsic finer topology of the locale $X$. This leads us to define \emph{uniform} locales.
\begin{definition}
 A \emph{uniform locale} is a pre-uniform locale $(X, \U)$ such that every $u \in \O X$ satisfies $u = \bigvee_{v \vartriangleleft u} v$.
\end{definition}

We say a morphism of (pre-)uniform locales $f\colon (X, \U) \to (Y,\V)$ is a \emph{uniform embedding} if $f$ is a locale embedding and the covers $f^*[V]$ for $V \in \V$ form a filter base for $\U$.

\subsection{Completeness}

A uniform locale $X$ is said to be \emph{complete} if every uniform embedding of $X$ into another uniform locale is closed. The category $\CUnifLoc$ of complete uniform locales is a reflective subcategory of the category $\PreUnifLoc$ of pre-uniform locales. The reflector is called the \emph{completion} functor $\Cvar$.

\begin{theorem}\label{prop:completion_via_filters}
 Let $(X,\U)$ be a pre-uniform locale. The completion of $X$ has an underlying locale with a presentation given by a generator $[a \in F]$ for each $a \in \O X$ and the following relations:
 \begin{enumerate}
 \item $[1 \in F] = 1$,
 \item $[a \wedge b \in F] = [a \in F] \wedge [b \in F]$,
 \item $[a \in F] \le \bigvee\{1 \mid a > 0\}$\footnote{This join is indexed by a subsingleton. Classically, it is equal to $1$ if $a > 0$ and $0$ if $a = 0$. Here only the $a = 0$ case gives a nontrivial condition, namely $[0 \in F] = 0$.},
 \item $\bigvee_{u \in U} [u \in F] = 1$ for each $U \in \U$,
 \item $[a \in F] \le \bigvee_{b \vartriangleleft a} [b \in F]$.
 \end{enumerate}
 The uniformity on $\Cvar X$ is generated by the covers of the form $\{[u \in F] \mid u \in U\}$ for $U \in \U$
 and the unit of the adjunction $\gamma_X\colon X \to \Cvar X$ is defined by $\gamma_X^*\colon [a \in F] \mapsto \bigvee_{b \vartriangleleft a} b$.
\end{theorem}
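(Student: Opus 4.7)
The plan is to verify that the presented locale, equipped with the described filter of covers, is a complete uniform locale and that $\gamma_X$ is the unit of the reflection. Conceptually, one reads the five relations as the geometric axioms for a generalised point of $\Cvar X$: (i) and (ii) say that $F$ is closed under finite meets and contains the top, (iii) that $F$ is proper, (iv) that $F$ meets every uniform cover, and (v) that $F$ is rounded with respect to $\vartriangleleft$. So $\Cvar X$ is, by design, the locale of rounded Cauchy filters on $(X,\U)$, and the job is to show this locale is complete and that $\gamma_X$ is universal.

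First I would check that the assignment $\gamma_X^*\colon [a\in F] \mapsto \bigvee_{b\vartriangleleft a} b$ respects all five relations and hence extends to a frame homomorphism; relations (i)–(iii) are immediate, (iv) uses that each $U\in\U$ is a cover together with the interpolation $b\vartriangleleft a \Rightarrow \exists c.\, b\vartriangleleft c\vartriangleleft a$, and (v) is a direct transitivity computation. Next I would verify that the specified covers form a filter base for a uniformity on $\Cvar X$: each $\{[u\in F]\mid u\in U\}$ is a cover by (iv), and if $V^\star\le U$ in $\U$ then the analogous inequality passes to the quotient because the star operation is computed from $\between$, which is determined on generators by (iii). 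Relation (v) then ensures $\Cvar X$ is genuinely uniform, not just pre-uniform: one shows $[b\in F]\vartriangleleft [a\in F]$ in $\Cvar X$ whenever $b\vartriangleleft a$ in $X$, using a star-refinement, and extends this to all opens by (v). That $\gamma_X$ is a uniform morphism is then routine, since $\gamma_X^*[\{[u\in F]\mid u\in U\}]$ is the cover $\{\bigvee_{b\vartriangleleft u} b\mid u\in U\}$, which refines $U$.

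For the universal property I would argue as follows: given a uniform morphism $f\colon X\to Y$ with $Y$ complete, pulling back along $f^*$ sends rounded Cauchy filters on $Y$ to rounded Cauchy filters on $X$, and completeness of $Y$ means these correspond bijectively to points, so dually one obtains a frame map $\tilde f^*\colon \O Y\to \O\Cvar X$ defined on $v\in\O Y$ by sending it to the join of those $[u\in F]$ with $u\le f^*(v)$ (up to a rounding adjustment via $\vartriangleleft$). Checking that $\tilde f^*$ satisfies the relations and makes $\tilde f\circ\gamma_X=f$ is a direct verification; uniqueness follows because the image of $\gamma_X^*$ generates $\O X$ as a frame under the uniform topology, so any two uniform extensions must agree.

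The main obstacle is showing that $\Cvar X$ itself is complete, i.e.\ that every dense uniform embedding $\Cvar X\hookrightarrow Z$ is closed; doing this from the presentation directly is fiddly because one must demonstrate that the rounded Cauchy filters on $\Cvar X$ are already parametrised by generalised points. The cleanest route is to compare the presentation above with the Cauchy-filter construction of the completion already established in \cite{PicadoPultr} and \cite{manuell2024uniform}: the relations coincide term by term with the axioms for a regular Cauchy filter in those sources, so the two presentations yield isomorphic uniform locales, and completeness together with the universal property transports across the isomorphism.
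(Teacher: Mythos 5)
The paper itself does not prove this theorem: it is stated as background and the proof is deferred to \cite{PicadoPultr} and \cite{manuell2024uniform}, which is also where your own final paragraph lands. Your overall architecture --- read the relations as the geometric theory of regular Cauchy filters, check that $\gamma_X^*$ respects them, check that the covers $\{[u\in F]\mid u\in U\}$ generate an admissible uniformity, then establish completeness and the universal property --- is the standard one, and the verifications you sketch for $\gamma_X^*$ and for admissibility are essentially right (though note that both (iv) and (v) require \emph{interpolation} of $\vartriangleleft$, obtained from $V^\star\le U$, not mere transitivity, and that $[v\in F]\between[v'\in F]\Rightarrow v\between v'$ needs relations (ii) and (iii) together).

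The genuine gap is in the universal-property paragraph, which is where the actual content of the theorem lies. First, the transport of filters goes the wrong way: for a uniform $f\colon X\to Y$ one pushes a Cauchy filter $F$ on $X$ \emph{forward} to $\{v\in\O Y\mid f^*(v)\in F\}$; pulling a Cauchy filter on $Y$ back along $f^*$ need not yield a Cauchy filter on $X$, since $\U$ may contain covers not refined by any $f^*[V]$. Second, ``checking that $\tilde f^*$ satisfies the relations'' is not the right obligation: $\tilde f^*$ has domain $\O Y$, which is not presented, so what must actually be verified is that $v\mapsto\bigvee\{[u\in F]\mid u\vartriangleleft f^*(v)\}$ preserves arbitrary joins --- and this is exactly where completeness of $Y$ enters, so it is not a ``direct verification''. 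The standard route is to make $\Cvar$ functorial on generators, observe that $\gamma_Y$ is a dense uniform embedding for uniform $Y$, and use completeness of $Y$ to conclude that $\gamma_Y$ is closed and dense, hence an isomorphism, setting $\tilde f=\gamma_Y^{-1}\circ\Cvar f$; your uniqueness argument also needs repair, since it concerns generation of $\O X$ rather than of $\O\Cvar X$. Finally, completeness of $\Cvar X$ itself --- the crux --- is not argued except by matching the presentation with the literature. That appeal is legitimate and is in effect what the paper does, but it means your first three paragraphs are a consistency check rather than a proof; if a self-contained argument is wanted, the missing ingredient is precisely the ``dense $+$ closed $\Rightarrow$ iso'' mechanism for dense uniform embeddings and its application to $\gamma_{\Cvar X}$.
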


\begin{remark}
 The presentation above describes the geometric theory of regular Cauchy filters on $X$. The generator $[a \in F]$ says that the filter (here called $F$) contains $a$. The first two axioms say $F$ is a filter, the third that it is proper, the forth that it is Cauchy and the final one that it is regular. The regularity axiom is to avoid needing to take a quotient later (see \cite{manuell2024uniform} for details). Regular Cauchy filters are also how completions of uniform locales are usually constructed in the spatial setting, but note that by using presentations we get the correct topology on the completion `for free'.
\end{remark}

The map $\gamma_X$ is dense, and if $(X, \U)$ is uniform, then $\gamma$ is a uniform embedding. (In general, $\gamma$ can fail to be a locale embedding, but it is still \emph{initial} with respect to the forgetful functor $\PreUnifLoc \to \Loc$, which is the other half of the definition on a uniform embedding.)

\subsection{Quotients and suplattices}\label{sec:quotients}

The usual construction of the completion of a metric space via Cauchy sequences involves quotienting a set of Cauchy sequences to identify sequences that will converge to the same limit. In a similar way, our construction will need to take a quotient of a \emph{locale} of Cauchy sequences. See Vickers' construction of the localic completion of a metric space (not a metric locale) \cite[\S 7]{vickers1997localic} for a similar approach in a special case.

General quotients of locales can be badly behaved, but luckily the type of quotient we will need is of a special kind. Let us first look at an even more specific kind of quotient.

\begin{definition}
 A locale morphism $f\colon X \to Y$ is called an \emph{open quotient} if the frame map $f^*\colon \O Y \to \O X$ has a left adjoint retraction $f_!\colon \O X \to \O Y$ satisfying the Frobenius condition $f_!(a \wedge f^*(b)) = f_!(a) \wedge b$.
\end{definition}

Since $f_!$ is a left adjoint, it preserves arbitrary joins. Such a map between complete lattices is called a \emph{suplattice homomorphism}. The category of complete lattices and suplattice homomorphisms is called $\Sup$. The type of quotient we will need replaces the left adjoint $f_!$ in the definition of open quotient with a more general suplattice homomorphism.

\begin{definition}
 We will call a locale morphism $f\colon X \to Y$ a \emph{lower triquotient} if there is a suplattice homomorphism $f_\#\colon \O X \to \O Y$ such that $f_\#(a \wedge f^*(b)) = f_\#(a) \wedge b$ and $f_\#(1) = 1$. We call $f_\#$ a \emph{triquotiency assignment}.
\end{definition}
Note that in the presence of the other conditions, $f_\#(1) = 1$ is equivalent to $f_\#$ being a left inverse of $f^*$ in $\Sup$. As the name implies, these lower triquotient maps are a special case of the triquotient maps defined by Plewe in \cite{plewe1997localic}. There it is shown that triquotient maps are pullback-stable regular epimorphisms and even effective descent morphisms.

It is possible to get more `topological' intuition for these quotients. The \emph{lower powerlocale} $\PL X$ of a locale $X$ is the pointfree analogue of the lower Vietoris hyperspace. It is given by the composite of the forgetful functor from $\Frm$ to $\Sup$ and its left adjoint. This adjunction allows us to understand suplattice maps from $\O X \to \O Y$ as locale maps from $Y$ to $\PL X$.
The points of the lower powerlocale are closed sublocales of $X$ (but see \cite{bunge1996lower} for the constructive situation) and the subbasic opens are of the form $\lozenge a$ indicating whether the sublocale intersects with the open $a$. So maps $Y \to \PL X$ can be understood to be multivalued maps from $Y$ to $X$. (Compare the lower hemicontinuous set-valued functions in classical analysis.)

The fact that $f_\#$ is a left inverse of $f^*$ means that, up to taking closures, it corresponds to a \emph{multivalued section} of $f$. By ``up to taking closures'' we mean that each $y \in Y$ maps to a sublocale that is contained in the preimage of the closure of $y$, but might not be contained in the fibre of $y$ itself.
However, as discussed in \cite{vickers1994pointless}, the Frobenius condition allows us to remove these ``up to closure'' caveats.
Thus, a lower triquotient is precisely a locale map with a multivalued section in the above sense. Incidentally, an open map is one where the multivalued section can be chosen to pick out precisely the fibre of each point.

The locales in our construction of the completion will given by frame presentations. Thus, to find the triquotiency assignment we need a way to define suplattice homomorphisms in terms of frame presentations.

First note that suplattices themselves can be presented by generators and relations \cite{joyal1984galois}.
Next observe the set of generators in a presentation might come equipped a (pre-)order structure, in which case we usually ask for this order to be preserved by the inclusion of the generators into the resulting algebraic structure by imposing additional implicit relations.
We then have the following theorem.

\begin{theorem}[Coverage theorem \cite{vickers2006compactness}]\label{thm:sup_coverage}
 Consider a frame presentation \[\langle G \text{ preordered} \mid R_1 \sqcup R_2\rangle_\Frm\] where $G$ is a preordered set and the (non-implicit) relations are divided into two sets $R_1$ and $R_2$. Suppose $R_1$ consists of the relations $1 \le \bigvee G$ and $a \wedge b \le \bigvee\{c \in G \mid c \le a,b\}$ for $a,b \in G$,
 whereas $R_2$ consists of relations of the form $a \le \bigvee A$. Furthermore, suppose that whenever $a \le \bigvee A$ is a relation in $R_2$ and $b \le a$ , there is relation $b \le \bigvee B$ in $R_2$ for some refinement $B$ of $A$ such that $B \subseteq {\downarrow} b$.
 Then there is an order isomorphism \[\langle G \text{ preordered} \mid R_1 \sqcup R_2\rangle_\Frm \cong \langle G \text{ preordered} \mid R_2\rangle_\Sup.\]
\end{theorem}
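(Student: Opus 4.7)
The plan is to realise \(L := \langle G \text{ preordered} \mid R_2\rangle_\Sup\) as a frame in which the canonical map \(G \to L\) automatically satisfies the \(R_1\) relations. The two universal properties then produce mutually inverse homomorphisms between \(L\) and \(F := \langle G \text{ preordered} \mid R_1 \sqcup R_2\rangle_\Frm\).

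First I would give a concrete description of \(L\) as a closure system inside \(\D(G)\): the ``\(R_2\)-closed'' down-sets, namely those \(S \in \D(G)\) such that whenever \(a \le \bigvee A\) is a relation in \(R_2\) with \(A \subseteq S\), one has \(a \in S\). Standard arguments identify this with the suplattice presentation, with the map \(\eta\colon G \to L\) sending \(a\) to the closure of \({\downarrow}a\), joins computed as the \(R_2\)-closure of unions, and binary meets computed as intersections (which patently preserve \(R_2\)-closedness). The \(R_1\) relations hold in \(L\) for free: \(\bigvee \eta[G] = 1\) since \(G\) itself is \(R_2\)-closed, and \(\eta(a) \wedge \eta(b)\) is the closure of \(\{c \in G \mid c \le a,\ c \le b\}\) because \({\downarrow}a \cap {\downarrow}b\) is precisely the down-set generated by that set.

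The main obstacle is showing that binary meet distributes over arbitrary joins, so that \(L\) really is a frame. Writing \(\overline{T}\) for the \(R_2\)-closure of a subset \(T \subseteq G\) and setting \(U = \bigcup_i S_i\), this reduces to checking that every \(b \in S \cap \overline{U}\) lies in \(\overline{S \cap U}\), whenever \(S\) and the \(S_i\) are closed down-sets. I would prove this by transfinite induction on the stage at which \(b\) enters \(\overline{U}\). If \(b \in {\downarrow}U\), then \(b\) lies in some \(S_i\) by downward closure, so \(b \in S \cap U\). Otherwise, \(b \le a\) for some \(a\) introduced via a relation \(a \le \bigvee A\) in \(R_2\) whose body \(A\) already lies at an earlier stage of the closure. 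The stability hypothesis on \(R_2\) is precisely what is needed here: it supplies a further relation \(b \le \bigvee B\) in \(R_2\) with \(B \subseteq {\downarrow}b\) and \(B\) refining \(A\). Then each \(b' \in B\) lies in \(S\) (since \(b' \le b \in S\) and \(S\) is a down-set) and lies at a strictly earlier stage of the closure (via its refining element of \(A\), using that the stages are down-sets), so the inductive hypothesis places \(B\) inside \(\overline{S \cap U}\); the \(R_2\)-closedness of the latter then yields \(b \in \overline{S \cap U}\).

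With \(L\) now a frame satisfying both \(R_1\) and \(R_2\), the universal property of \(F\) produces a frame (hence suplattice) homomorphism \(F \to L\), and the universal property of \(L\) as a suplattice presentation gives a suplattice homomorphism \(L \to F\). Both composites fix the generators, and since the generators generate both sides under joins, both composites are the identity, yielding the desired order isomorphism.
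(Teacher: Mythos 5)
The paper does not actually prove this theorem: it is imported wholesale from Vickers, so there is no in-paper argument to compare against. Judged on its own, your proof is essentially the standard C-ideal argument for coverage theorems and it does work: realising $\langle G \mid R_2\rangle_\Sup$ as the $R_2$-closed down-sets, proving $S \cap \overline{U} \subseteq \overline{S \cap U}$ by induction on the stage at which an element enters the closure (with the refinement hypothesis on $R_2$ doing exactly the work you say it does), and then playing the two universal properties off against each other. The final step is also fine, but note that it silently uses the $R_1$ relations inside $F$: a join-preserving composite fixing the generators is the identity only because $1 \le \bigvee G$ and $a \wedge b \le \bigvee\{c \mid c \le a,b\}$ make every element of $F$ a join of generators, not merely a join of finite meets of them.

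There is one genuine slip. The claim that the $R_1$ relations hold in $L$ ``for free'' is not justified by the reason you give. The binary-meet relation in $L$ reads $\overline{{\downarrow}a} \cap \overline{{\downarrow}b} \le \overline{{\downarrow}a \cap {\downarrow}b}$, and the observation that ${\downarrow}a \cap {\downarrow}b$ is the down-set generated by $\{c \mid c \le a,b\}$ says nothing about why the closure operator commutes with this intersection --- that inclusion is precisely an instance of the meet-stability phenomenon and is false without the refinement hypothesis. Fortunately your main lemma rescues it: its proof never actually uses closedness of $S$ or of the $S_i$, only that they are down-sets, so applying it twice (once with $S = {\downarrow}a$, $U = {\downarrow}b$, once with $S = \overline{{\downarrow}b}$, $U = {\downarrow}a$) and taking closures yields the needed inclusion. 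You should either state the lemma in that generality and cite it here, or verify $R_1$ after the distributivity argument rather than before. A smaller remark: since the paper insists on constructive validity, the transfinite ``stage at which $b$ enters $\overline{U}$'' is better replaced by the equivalent induction-free device of showing that $\{b \in \overline{U} \mid {\downarrow}b \cap S \subseteq \overline{S \cap U}\}$ is an $R_2$-closed down-set containing $U$; this is the form in which Vickers proves it.
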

This will allow us to easily define a suplattice homomorphism out of such a presented frame by specifying it on generators.

\section{The locale of Cauchy sequences}\label{sec:locale_of_cauchy_sequences}

Let $(X, \U)$ be a pre-uniform locale and let $\B$ be a base for the uniformity. We wish to construct a locale of a Cauchy sequences in $X$. We can start by considering the locale of \emph{all} sequences $X^\N$. This is simply the countably infinite product of $X$ with itself. For each open $u \in \O X$ there is a subbasic open of $X^\N$ which we might call $[s(n) \in u]$ that contains the sequences whose $n$th term lies in $U$.

To cut out a sublocale of Cauchy sequence we need to find a family of geometric sequents that express when a sequence is Cauchy.
A sequence $s\colon \N \to X$ is \emph{Cauchy} if \[\forall U \in \B.\ \exists N \in \N.\ \forall n,n' \ge N.\ \exists u \in U.\ s(n) \in u \land s(n') \in u.\]
That is, if for every uniform cover, there is a point in the sequence after which any two elements lie inside the same open from the cover. This expresses the idea that the terms in the sequence eventually get arbitrarily close together with respect to the uniformity.

We can get rid of one quantifier using $s(n) \in u \land s(n') \in u \iff (s(n), s(n')) \in u \oplus u$ and $\exists u \in U.\ (s(n),s(n')) \in u \oplus u \iff (s(n),s(n')) \in \bigvee_{u \in U} u \oplus u$.
So the condition becomes \[\forall U \in \B.\ \exists N \in \N.\ \forall n,n' \ge N.\ (s(n),s(n')) \in \bigvee_{u \in U} u \oplus u.\]
However, this is still apparently too logically complex to be expressed in geometric logic.

Vickers sidesteps this issue in his construction from \cite{vickers1997localic} of the localic completion of a metric space by using \emph{rapidly converging} Cauchy sequences.
Rather than asking for the mere existence of some $N$ for each measure of closeness, for a rapidly converging Cauchy sequence we require the elements to get closer together a some fixed (exponential) rate:
$\forall N \in \N.\ \forall n, n' \ge N.\ d(s(n), s(n')) < 2^{-N}$. This can be expressed geometrically: we add one axiom for each $N, n, n' \in \N$ such that $n,n' \ge N$.
We cannot use this trick in our situation, however, since we lack a metric and there is no way to make sense of an exponential rate of convergence for sequences in a general (pre-)uniform locale.

Instead we \emph{Skolemise} the definition of Cauchy sequence.
This means we replace the complex definition above with one only involving universal quantifiers.
The idea is to move quantifiers past each other by replacing subformulas of the form $\forall x.\ \exists y.\ \phi(x,y)$ with $\exists f.\ \forall x.\ \phi(x,f(x))$.
Eventually we obtain an expression with all the existential quantifiers on the outside. We then take these existentially quantified variables as additional \emph{data}.

Applying this to our definition of Cauchy sequence $s\colon \N \to X$ we get
\[\exists m \in \N^\B.\ \forall U \in \B.\ \forall n,n' \ge m(U).\ (s(n),s(n')) \in \bigvee_{u \in U} u \oplus u.\]
A function $m\colon \N \to \B$ satisfying this is called a \emph{modulus of convergence} or \emph{modulus of Cauchyness} for the sequence $s$. Such a modulus is an explicit measure of how quickly the terms of the sequence approach each other. Incorporating the modulus into the data we obtain a definition of a \emph{modulated Cauchy sequence} as a pair $(s,m)$ such that
\[\forall U \in \B.\ \forall n,n' \ge m(U).\ (s(n),s(n')) \in \bigvee_{u \in U} u \oplus u.\]
This is finally in a form that we will be able to formulate as a geometric theory. (Since we were already planning on quotienting the locale of Cauchy sequences, adding additional data that will just be quotiented away anyway is not a problem.)

There is one final wrinkle. Skolemisation makes use of the axiom of choice. Indeed, the modulus $m$ is precisely a choice function. In the pointfree setting, choice is usually best avoided, but it is not difficult to modify the above approach: simply take $m$ to be a left-total relation (i.e.\ a multivalued function) instead of a function. This means we do not choose just one $n$ for each $U$, but multiple. Otherwise the definition is unchanged.\footnote{
 Actually the above paragraph is misleading, since there is a pointfree version of the axiom of choice which is constructively valid (see \cite[Proposition 2.3.7]{henry2016corrected}).
 Henry's choice theorem does require that the index set $\B$ have decidable equality, which we would like to avoid.
 Nonetheless, I believe it is likely possible to make things work in general.
 The real reason we do not use functions for moduli is for simplicity. It ensures that the space of moduli is overt and leads to technical simplifications later on.}

We may now find a presentation for the locale of modulated Cauchy sequences. We first construct the locale of pairs $(s,m)$ where $s$ is a sequence $s\colon \N \to X$ and $m$ is a left-total relation $m \colon \B \slashedrightarrow \N$. We have generators $[s(n) \in u]$ for each $n \in \N$ and $u \in \O X$ and generators $[m(U) = k]$ for each $U \in \B$ and $k \in \N$. These satisfy
\[\bigvee_\alpha \bigwedge_{u \in F_\alpha} [s(n) \in u] = [s(n) \in \bigvee_\alpha \bigwedge F_\alpha]\]
for each $n \in \N$ and each family $(F_\alpha)_\alpha$ of finite subsets of $\O X$ to ensure that each factor of $X^\N$ has the topology of $X$, and
\[1 \le \bigvee_{k \in \N} {[m(U) = k]}\]
for each for $U \in \B$ so that $m$ is left total (i.e.\ for each $U \in \B$ there is a $k \in \N$ that it maps to / that is related to it).

Now we cut this locale down to the locale of modulated Cauchy sequences by imposing the Cauchyness axiom:
\[[m(U) = k] \le \bigvee_{u \in U} [s(n) \in u] \wedge [s(n') \in u]\]
for $U \in \B$, $k \in \N$ and $n,n' \ge k$. This is simply the translation of the Skolemised condition we found above into geometric logic.
In summary we have the following definition.
\begin{definition}\label{def:modcauchy}
 Let $X$ be a pre-uniform locale with base $\B$ for the uniformity. The locale $\ModCauchy(X)$ of modulated Cauchy sequences is given by a presentation
 with generators $[s(n) \in u]$ for $n \in \N$ and $u \in \O X$ and $[m(U) = k]$ for $U \in \B$ and $k \in \N$, and the following relations:
\begin{enumerate}
 \item $\bigvee_\alpha \bigwedge_{u \in F_\alpha} [s(n) \in u] = [s(n) \in \bigvee_\alpha \bigwedge F_\alpha]$ for $n \in \N$ and formal expression $\bigvee_\alpha \bigwedge F_\alpha$,
 \item $1 \le \bigvee_{k \in \N} {[m(U) = k]}$ for $U \in \B$,
 \item $[m(U) = k] \le \bigvee_{u \in U} [s(n) \in u] \wedge [s(n') \in u]$ for $U \in \B$, $k \in \N$ and $n,n' \ge k$.
\end{enumerate}
\end{definition}
\begin{remark}
 This definition \emph{does} depend on the base $\B$ we choose for the uniformity, since this affects the domain of modulus. However, these differences will disappear once we take the quotient. If a canonical choice is desired, $\B$ can always be taken to be equal to the entire uniformity $\U$, though other choices are usually more convenient.

 If $X$ is given by a presentation and the basic uniform covers only involve those generators, we may restrict the generators $[s(n) \in u]$ to those with generators $u$ and the formal expressions in (i) to basic relations without changing the resulting locale. I believe $\ModCauchy(X)$ can then be seen to be stable under change of base topos.
\end{remark}

\section{The limit map}

We will now construct a map $q\colon \ModCauchy(X) \to \Cvar X$ that `takes the limit' of the Cauchy sequences.
How might we do this? Well, we previously described $\Cvar X$ in terms of regular Cauchy filters, so we must associate a regular Cauchy filter to each modulated Cauchy sequence.

There is a standard way to turn a sequence into a filter: the \emph{filter of tails}. The filter of tails of a sequence $s\colon \N \to X$ is $\{u \in \O X \mid \exists N \in \N.\ \forall n \ge N.\ s(n) \in u\}$. Even if $s$ is convergent, this filter might fail to be regular, but there is also a standard way to regularise a Cauchy filter $F$ --- simply consider $\{u \in \O X \mid \exists u' \in F.\ u' \vartriangleleft u \}$.
Putting these together we have \[(s,m) \mapsto \{u \in \O X \mid \exists u' \in \O X.\ u' \vartriangleleft u \land \exists N \in \N.\ \forall n \ge N.\ s(n) \in u'\}.\]
We will show that this sends (modulated) Cauchy sequences to regular Cauchy filters, and furthermore, that this assignment is actually geometric (and hence continuous).
Note that
\begin{align*}
 q((s,m)) \in [u \in F] &\iff u \in q((s,m)) \\
                        &\iff \exists u' \in \O X.\ u' \vartriangleleft u \land \exists N \in \N.\ \forall n \ge N.\ s(n) \in u'.
\end{align*}
At this point we reach a snag: the universal quantification over $n \ge N$ is not geometric. Luckily, here we can make use of Cauchyness. If we need all elements of the sequence to eventually lie in $u'$, then it is sufficient to find one element $s(k)$ that is well inside $u'$ at a point in the sequence where all the subsequent terms are very close to $s(k)$. Specifically, we ask for some $k \in \N$, some basic uniform cover $U \in \B$ and some $v \vartriangleleft_U u'$ such that $s(k) \in v$ and $k \ge m(U)$.
(Then for any $n \ge k$ we have a $w \in U$ with $s(k) \in w \wedge v$ and $s(n) \in w$, so that $s(n) \in w \le \st(v,U) \le u'$.)
In fact, this condition is also necessary, so long as we can vary $u'$. (Suppose $s(n) \in u'$ for every $n \ge N$ and that $u' \vartriangleleft_V u$. By interpolating we have $u' \vartriangleleft_U u'' \vartriangleleft u$ for some $U \in \B$. Now consider $k' \in \N$ such that $m(U) = k'$ and set $k = \max(k',N)$. Note that $s(k) \in u'$ since $k \ge N$ and so we satisfy the required condition with $v = u'$ and with $u''$ in place of of $u'$.)

This is now a condition we can phrase geometrically:
\begin{align*}
 q((s,m)) \in [u \in F] &\iff \exists U \in \B.\ \exists v \vartriangleleft_U u' \vartriangleleft u.\ \exists k \in \N.\ k \ge m(U) \land s(k) \in v \\
                        &\iff (s,m) \in \bigvee_{U \in \B \vphantom{k'}} \bigvee_{\; v \vartriangleleft_U u' \vartriangleleft u \vphantom{k'}} \bigvee_{\; k' \le k \in \N} [m(U) = k'] \wedge [s(k) \in v].
\end{align*}
In other words we have arrived at the following definition.
\begin{definition}\label{def:limit_map}
 Let $X$ be a pre-uniform locale with base $\B$ for the uniformity. The limit map $q\colon \ModCauchy(X) \to \Cvar X$ is defined by
 \[q^*([u \in F]) = \bigvee_{U \in \B \vphantom{k'}} \bigvee_{\; v \vartriangleleft_U u' \vartriangleleft u \vphantom{k'}} \bigvee_{\; k' \le k \in \N} [m(U) = k'] \wedge [s(k) \in v].\]
\end{definition}

We do still need to check that this gives a well-defined frame homomorphism. Let us do so now.

\begin{lemma}
 There is a unique frame homomorphism $q^*\colon \O\Cvar X \to \O \ModCauchy(X)$ satisfying the equation in \cref{def:limit_map}.
\end{lemma}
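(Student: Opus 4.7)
By the universal property of the frame presentation of $\O\Cvar X$ in \cref{prop:completion_via_filters}, the existence and uniqueness of $q^*$ reduce to checking that the formula of \cref{def:limit_map} respects the five relations (i)--(v) of that theorem when substituted for each generator $[u \in F]$. I would dispatch the bookkeeping-free relations first. For (i), the triple $v = u' = 1$ satisfies $1 \vartriangleleft_U 1 \vartriangleleft 1$, so together with $[s(k) \in 1] = 1$ and $\bigvee_{k'}[m(U) = k'] = 1$ from \cref{def:modcauchy}(ii) we obtain $q^*([1 \in F]) = 1$. For (iii), any $u'$ with $u' \vartriangleleft 0$ must itself be $0$, forcing $v = 0$ and then $[s(k) \in 0] = 0$ via the empty instance of \cref{def:modcauchy}(i). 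For (v), interpolating $u' \vartriangleleft b \vartriangleleft a$ makes each summand of $q^*([a \in F])$ indexed by $v \vartriangleleft_U u' \vartriangleleft a$ reappear verbatim inside $q^*([b \in F])$ via $v \vartriangleleft_U u' \vartriangleleft b$.

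For the cover relation (iv), given $U \in \U$ I choose $V_1, V_2, V_3 \in \B$ with $V_1$ refining $U$, $V_2^\star \le V_1$, and $V_3^\star \le V_2$. For each $w \in V_3$, there is $w^* \in V_2$ with $\st(w, V_3) \le w^*$ and then $w^{**} \in V_1$ with $\st(w^*, V_2) \le w^{**}$, giving $w \vartriangleleft_{V_3} w^* \vartriangleleft u$ for some $u \in U$, so that $[m(V_3) = k] \wedge [s(k) \in w]$ lies inside $q^*([u \in F])$. Cauchyness (\cref{def:modcauchy}(iii)) with $n = n' = k$ yields $[m(V_3) = k] \le \bigvee_{w \in V_3}[s(k) \in w]$, and summing over $k$ via \cref{def:modcauchy}(ii) gives $\bigvee_{u \in U}q^*([u \in F]) = 1$.

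The binary meet relation (ii) is the heart of the argument. Monotonicity handles $\le$; for $\ge$, fix a summand
\[ T = [m(U_a) = k'_a] \wedge [s(k_a) \in v_a] \wedge [m(U_b) = k'_b] \wedge [s(k_b) \in v_b] \]
of $q^*([a \in F]) \wedge q^*([b \in F])$ with $v_a \vartriangleleft_{U_a} u'_a \vartriangleleft a$ and symmetrically for $b$. First, for any $k \ge \max(k'_a, k'_b)$, applying Cauchyness on $U_a$ at $n = k_a$, $n' = k$ (and on $U_b$ likewise), using \cref{def:modcauchy}(i) to merge meets of $[s(\cdot) \in -]$ at a single index, and noting that each $w \in U_a$ meeting $v_a$ lies below $\st(v_a, U_a) \le u'_a$, yields $T \le [s(k) \in u'_a \wedge u'_b]$. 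Secondly, since $\vartriangleleft$ is stable under finite meets, $u'_a \wedge u'_b \vartriangleleft a \wedge b$; two interpolations produce $u'_a \wedge u'_b \vartriangleleft u_1 \vartriangleleft u_2 \vartriangleleft a \wedge b$, and the filter-base property of $\B$ lets me pick a single $W \in \B$ refining the covers witnessing the first two of these relations, so that $(u_1, u_2, a \wedge b)$ is a valid $(v, u', u)$ triple for $q^*([a \wedge b \in F])$ with respect to $W$. Thirdly, I split $T = \bigvee_{k''} T \wedge [m(W) = k'']$ and, for each $k''$, take $k \ge \max(k'_a, k'_b, k'')$; the first step then gives $T \le [s(k) \in u'_a \wedge u'_b] \le [s(k) \in u_1]$ (the last $\le$ from $u'_a \wedge u'_b \vartriangleleft_W u_1$ implying $u'_a \wedge u'_b \le u_1$), so $T \wedge [m(W) = k'']$ is a permissible summand of $q^*([a \wedge b \in F])$.

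The main obstacle is the bookkeeping in (ii). Cauchyness can only be invoked at sequence indices exceeding the modulus value, so the index $k$ must be chosen only after the expansion $1 = \bigvee_{k''}[m(W) = k'']$ has exposed $k''$; and one has to secure a single cover $W \in \B$ simultaneously witnessing all the interpolations between $u'_a \wedge u'_b$ and $a \wedge b$, which is where the filter-base property of $\B$ is genuinely used.
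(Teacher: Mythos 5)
Your proposal is correct and follows essentially the same route as the paper: both verify the five relations in the presentation of $\O\Cvar X$, with the crucial relation (ii) handled by the same combination of the Cauchyness axiom (to synchronise the two sequence indices at a common later index), left-totality of the modulus (to expose $[m(W) = k'']$ before that index is chosen), and interpolation of $\vartriangleleft$. The remaining differences are only cosmetic reorganisations of the bookkeeping in (ii) and (iv).
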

\begin{proof}
 We must show that $q^*$ preserves the relations in the presentation of $\O\Cvar X$ from \cref{prop:completion_via_filters}.
 
 (i) Expanding the definition of $q^*$, considering any $U \in \B$, taking $v = u' = u = 1$ and $k' = k$, and noting that $[s(k) = 1] = 1$, we have $q^*([1 \in F]) \ge \bigvee_{k \in \N} [m(U) = k] \ge 1$, where the second inequality is simply the left-totality condition of \cref{def:modcauchy}. Thus, relation (i) is preserved.
 
 (ii) It is clear that $q^*([u \in F])$ is monotonic in $u$. We show $q^*([u_1 \in F]) \wedge q^*([u_2 \in F])) \le q^*([u_1 \wedge u_2 \in F])$.
 Suppose $U_i, V_i \in \B$, $v_i \vartriangleleft_{U_i} u'_i \vartriangleleft_{V_i} u_i$ and $k'_i \le k_i \in \N$ for $i=1,2$. We want $[m(U_1) = k'_1] \wedge [m(U_2) = k'_2] \wedge [s(k_1) \in v_1] \wedge [s(k_2) \in v_2] \le q^*([u_1 \wedge u_2 \in F])$. The idea is now that we want to replace $k_1$ and $k_2$ with the same $n$, larger than both of them.
 
 Take $W \in \B$ such that $W^\star \le V_1 \wedge V_2$.
 By the Cauchyness axiom of \cref{def:modcauchy}, we have $[m(U_i) = k'_i] \le \bigvee_{w_i \in U_i} [s(n) \in w_i] \wedge [s(k_i) \in w_i]$ for any $n \ge \max(k'_1,k'_2)$.
 So $\bigwedge_{i\in\{1,2\}} [m(U_i) = k'_i] \le \bigvee_{w_1 \in U_1} \bigvee_{w_2 \in U_2} [s(n) \in w_1 \wedge w_2] \wedge \bigwedge_{i\in \{1,2\}} [s(k_i) \in w_i]$ for such an $n$.
 Now by left totality, $\bigvee_{n' \in \N} [m(W) = n'] = 1$ and hence $\bigwedge_{i\in\{1,2\}} [m(U_i) = k'_i] \le \bigvee_{w_1 \in U_1} \bigvee_{w_2 \in U_2} \bigvee_{n' \in \N} [m(W) = n'] \wedge [s(\max(n',k_1,k_2)) \in w_1 \wedge w_2] \wedge \bigwedge_{i\in\{1,2\}} [s(k_i) \in w_i]$.
 
 Now note $[s(k_i) \in w_i] \wedge [s(k_i) \in v_i] = [s(k_i) \in w_i\wedge v_i] \le \bigvee\{1 \mid w_i \between v_i\}$, where this last inequality follows from applying frame map $a \mapsto [s(k_i) \in a]$ to $w_i \wedge v_i \le \bigvee\{1 \mid w_i \wedge v_i > 0\}$ in $\O X$.
 But $w_i \between v_i$, together with $w_i \in U_i$ and $v_i \vartriangleleft_{U_i} u'_i$, implies $w_i \le u'_i \vartriangleleft_{V_i} u_i$, and hence $w_1 \wedge w_2 \vartriangleleft_W u'' \vartriangleleft_W u_1 \wedge u_2$ for some $u''$ by using $\wedge$-stability and then interpolation.
 Thus, we have
 \begin{align*}
  &\bigwedge_{i\in\{1,2\}} [m(U_i) = k'_i] \wedge [s(k_i) \in v_i] \\
  &\quad\le \!\bigvee_{\substack{w_1 \in U_1 \\ w_2 \in U_2}}\, \bigvee_{n' \in \N} [m(W) = n'] \wedge [s(\max(n',k_1,k_2)) \in w_1\wedge w_2] \wedge \bigvee\{1 \mid w_1 \between v_1 \text{ and } w_2 \between v_2\} \\
  &\quad\le \bigvee_{w \vartriangleleft_W u'' \vartriangleleft u_1 \wedge u_2} \, \bigvee_{n' \in \N} [m(W) = n'] \wedge [s(\max(n',k_1,k_2)) \in w] \\
  &\quad\le \bigvee_{w \vartriangleleft_W u'' \vartriangleleft u_1 \wedge u_2} \, \bigvee_{n' \le n \in \N} [m(W) = n'] \wedge [s(n) \in w] \\
  &\quad\le q^*([u_1 \wedge u_2 \in F]),
 \end{align*}
 as required.
 
 (iii) Take $U \in \B$, $v \vartriangleleft_U u' \vartriangleleft u$ and $k' \le k \in \N$. We must show $[m(U) = k'] \wedge [s(k) \in v] \le \bigvee\{1 \mid u > 0\}$.
 But $[m(U) = k'] \wedge [s(k) \in v] \le [s(k) \in v] \le \bigvee\{1 \mid v > 0\} \le \bigvee\{1 \mid u > 0\}$ and so we are done.
 
 (iv) To show Cauchyness, we take $U \in \B$ and prove $\bigvee_{u \in U} q^*([u \in F]) = 1$. Take $V, U' \in \B$ such that $V^\star \le U'$ and $U'^\star \le U$.
 The intuition is to consider $v \in V$ such that $s(m(V)) \in v$.
 More formally, we note $\bigvee_{k \in \N} [m(V) = k] = 1$ and that $\bigvee_{v \in V} [s(k) \in v] = 1$ for each $k \in \N$ (since $\bigvee V = 1$). Hence, $\bigvee_{v \in V} \bigvee_{k \in \N} [m(V) = k] \wedge [s(k) \in v] = 1$. Now note that $v \in V$ implies $v \le \st(v,V) \in V^\star$ and so $v \vartriangleleft_V u'$ for some $u' \in U'$ and similarly $u' \vartriangleleft u$ for some $u \in U$. Therefore,
 \begin{align*}
  1 &= \bigvee_{v \in V} \bigvee_{k \in \N} [m(V) = k] \wedge [s(k) \in v] \\
    &\le \bigvee_{u \in U} \bigvee_{v \vartriangleleft_V u' \vartriangleleft u} \bigvee\nolimits_{k \in \N} [m(V) = k] \wedge [s(k) \in v] \\
    &\le \bigvee_{u \in U} q^*([u \in F]),
 \end{align*}
 as required.
 
 (v) The regularity condition essentially holds construction and by interpolating $u' \vartriangleleft u$ to $u' \vartriangleleft u'' \vartriangleleft u$.
\end{proof}

\section{The completion as a quotient}

We will now show that the limit map $q\colon \ModCauchy(X) \to \Cvar X$ is a well-behaved quotient map --- specifically, a lower triquotient.
This will allow us to conceptualise the completion $\Cvar X$ as a quotient of $\ModCauchy(X)$ by the kernel equivalence relation of $q$.

We show $q$ is a lower triquotient by describing a triquotiency assignment, which we view as defining a multivalued section as described in \cref{sec:quotients}.
We wish to give, for each point of the completion, a nontrivial geometrically-definable collection of modulated Cauchy sequences which converge to it.

In order for this collection to admit a geometric definition we will choose it to contain the modulated Cauchy sequences that converge to the point particularly quickly (as measured by the modulus).
For a point of $\Cvar X$ given by a regular Cauchy filter $F$, we might consider the modulated sequences $(m,s)$ such that
$\forall U \in \B.\ \forall n \ge m(U).\ \exists u \in U.\ u \in F \land s(n) \in u$. However, we actually want the sequences to converge even faster than this --- namely, we ask
$\forall U \in \B.\ \exists V \in \B.\ V^\star \le U \land \forall n \ge m(U).\ \exists v \in V.\ v \in F \land s(n) \in v$.

The reason behind this is due to a subtlety in the above `modulus of convergence' as compared to a modulus of Cauchyness. A sequence $s$ in a metric space converges to $x$ if for all $\epsilon > 0$ there is an $N \ge \N$ such that $\forall n \ge N.\ d(s(n),x) < \epsilon$. The corresponding modulus $m$ sends each $\epsilon$ to an appropriate such $N$. Every convergent sequence is Cauchy: for $\epsilon > 0$ and $n,n' \ge m(\epsilon)$ we have $d(s(n),s(n')) \le d(s(n),x) + d(x,s(n')) < 2\epsilon$. But since we had to multiply $\epsilon$ by 2, $m$ is not necessarily a modulus of Cauchyness for $s$.
To ensure that $(m,s)$ is indeed a modulated Cauchy sequence, we must use replace $\epsilon$ with $\epsilon/2$ in the definition of convergence. In the uniform setting, replacing $\epsilon$ with $\epsilon/2$ corresponds to replacing a uniform cover $U$ with a uniform cover $V$ such that $V^\star \le U$.

Intuitively we are using the assignment
\begin{align*}
 \overline{q_\#}\colon F \mapsto \{(s,m) \in \ModCauchy(X) \mid {} & \forall U \in \B.\ \exists V \in \B.\ V^\star \le U \land \\
 & \forall n \in \N.\ m(U) \le n \implies \exists v \in V.\ v \in F \land s(n) \in v \}.
\end{align*} 
What does this do to opens? The generating opens of $\PL \ModCauchy(X)$ are of the form $\lozenge ([s(\vec{n}) \in \vec{u}] \wedge [m(\vec{U}) = \vec{k}])$ with $S \in \lozenge ([s(\vec{n}) \in \vec{u}] \wedge [m(\vec{U}) = \vec{k}]))$ indicating that $S \between (\bigwedge_i [s(n_i) \in u_i] \wedge \bigwedge_j [m(U_j) = k_j])$.
For simplicity assume the $n_i$'s are distinct and each $u_i > 0$.
We wish to know for which $F \in \Cvar X$ we have $\overline{q_\#}(F) \between [s(\vec{n}) \in \vec{u}] \wedge [m(\vec{U}) = \vec{k}]$. Consider a modulated Cauchy sequence $(s,m)$ lying in the intersection. The open specifies the sequence $s$ and the modulus $m$ at finitely many places.
How does asking $(s,m) \in \overline{q_\#}(F)$ interact with these constraints?

We have $\forall U \in \B.\ \exists V \in \B.\ V^\star \le U \land
 \forall n \in \N.\ m(U) \le n \implies \exists v \in V.\ v \in F \land s(n) \in v$.
In particular, this holds for each $U_j \in \vec{U}$ (which then gives a $V_j \in \B$ with $V_j^\star \le U_j$) and each $n_i \in \vec{n}$ such that $k_j \le n_i$.
This means we have a $v_i^j \in V_j$ such that $v_i^j \in F$ and $s(n_i) \in v_i^j$.
For fixed $i$, this entails $s(n_i) \in v_i \coloneqq \bigwedge_{j \colon k_j \le n_i} v^j_i$, which in turn lies in $\bigcap_{j \colon k_j \le n_i} {\downarrow} V_j$ and in $F$ (since $F$ is a filter).
Now note that to have $s(n_i) \in v_i$ and $s(n_i) \in u_i$ we require $v_i \between u_i$, which suggests
\[\overline{q_\#}^{\,*}(\lozenge ([s(\vec{n}) \in \vec{u}] \wedge [m(\vec{U}) = \vec{k}])) = \bigvee\nolimits_{\vec{V} \in \B^{\abs{\vec{k}}} \colon V_j^\star \le U_j} \, \bigwedge\nolimits_i \bigvee \{[v_i \in F] \mid v_i \in \bigcap_{\mathclap{j \colon k_j \le n_i}} {\downarrow} V_j ,\, v_i \between u_i\}. \]
Distributing the meet past the join and taking $q_\#(u) = \overline{q_\#}^{\,*}(\lozenge u)$, we arrive at the following definition.
\begin{definition} \label{def:triquotiency}
 Let $X$ be a pre-uniform locale with base $\B$ for the uniformity. The suplattice map $q_\#\colon \O \ModCauchy(X) \to \O \Cvar X$ is defined by
 \[q_\#([s(\vec{n}) \in \vec{u}] \wedge [m(\vec{U}) = \vec{k}]) = \bigvee_{\substack{\vec{V} \in \B^{\abs{\vec{k}}} \\ V_j^\star \le U_j}}\, \bigvee_{\substack{\vec{v} \in (\cup \B)^{\abs{\vec{n}}} \\ v_i \in {\downarrow} V_j \text{ for } k_j \le n_i \\ v_i \between u_i}} \!\! \bigwedge\nolimits_i [v_i \in F]\]
 where $\vec{n}$ is a finite list of district natural numbers and $\vec{u}$ is a corresponding list of nonzero opens.
\end{definition}
\begin{lemma}
 There is indeed a unique suplattice homomorphism $q_\#\colon \O \ModCauchy(X) \to \O \Cvar X$ satisfying the equation in \Cref{def:triquotiency}.
\end{lemma}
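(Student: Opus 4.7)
My plan is to apply the Coverage theorem (\cref{thm:sup_coverage}) to the presentation of $\O\ModCauchy(X)$ from \cref{def:modcauchy}, which will reduce the problem to defining $q_\#$ on an appropriate preordered set of generators and checking only the $R_2$ relations. I would take the generators to be all formal finite meets $[s(\vec n) \in \vec u] \wedge [m(\vec U) = \vec k]$, preordered so that shrinking any $u_i$ or adjoining further conjuncts yields a smaller element. This set is closed under binary meets (after absorbing $[s(n) \in u_1] \wedge [s(n) \in u_2]$ into $[s(n) \in u_1 \wedge u_2]$), so the implicit $R_1$ relations of the Coverage theorem hold. The three families of relations in \cref{def:modcauchy} are recast in the required form $a \le \bigvee A$ by meeting with an arbitrary basic generator $g$; the refinement coherence is routine because those meets cut down each join uniformly. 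Uniqueness of $q_\#$ then follows because these basic meets generate $\O\ModCauchy(X)$ as a suplattice.

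Defining $q_\#$ on generators by the formula of \cref{def:triquotiency} (extended to non-canonical inputs by merging repeated $s$-indices and sending any meet involving $u_i = 0$ to $0$), monotonicity is straightforward from inspection, using the directedness of $\B$ under the relation $V^\star \le U$. Preservation of the topology relations on each factor of $X^\N$ reduces, for fixed outer data, to the statement that $w \mapsto \bigvee\{[v \in F] : v \in \cup \B,\, v \between w\}$ is a frame homomorphism $\O X \to \O\Cvar X$, which follows from the fact that $[- \in F]$ is one by \cref{prop:completion_via_filters} together with the compatibility of $\between$ with arbitrary joins and finite meets. Preservation of the left-totality relation for $m$ is immediate from the existence of some $V \in \B$ with $V^\star \le U$.

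The main obstacle is preservation of the Cauchyness relation: for $n, n' \ge k$,
\[q_\#(g \wedge [m(U) = k]) \;\le\; \bigvee_{u \in U} q_\#\bigl(g \wedge [s(n) \in u] \wedge [s(n') \in u]\bigr).\]
Each LHS summand comes with a choice of $V \in \B$ satisfying $V^\star \le U$ and data $(\vec V, \vec v)$ for $g$, contributing the term $\bigwedge_i [v_i \in F]$. The strategy is to pick $W \in \B$ refining both $V$ and every $V_j$ with $k_j \le \max(n, n')$ (using that $\B$ is a filter base for $\U$), and to insert the Cauchy partition $\bigvee_{w \in W} [w \in F] = 1$ of \cref{prop:completion_via_filters}(iv) twice, replacing the above term by $\bigvee_{w_1, w_2 \in W} [w_1 \in F] \wedge [w_2 \in F] \wedge \bigwedge_i [v_i \in F]$. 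For each pair $(w_1, w_2) \in W \times W$, properness of $F$ reduces matters to the case $w_1 \between w_2$; then, writing $w_1 \le v_1$ and $w_2 \le v_2$ with $v_1, v_2 \in V$, the condition $v_1 \between v_2$ together with $V^\star \le U$ gives $v_1, v_2 \le \st(v_1, V) \le u$ for some $u \in U$, so setting $v_n := w_1$ and $v_{n'} := w_2$ yields a valid RHS witness at this $u$, the $V_j$-downward constraints at the new indices being ensured by the choice of $W$. This is essentially the filter-level translation of the standard fact that a modulated Cauchy sequence is Cauchy with respect to its own modulus.
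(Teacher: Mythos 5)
Your overall strategy matches the paper's: rework the presentation of $\O\ModCauchy(X)$ into finite-meet generators so that the coverage theorem applies, then check monotonicity and the recast relations. However, your treatment of the Cauchyness relation --- the crux of the whole verification --- has a genuine gap. You discard whatever witnesses are already attached to the indices $n,n'$ and install fresh ones $w_1,w_2$ drawn from a fine cover $W$. This produces a \emph{valid} witness assignment for the right-hand generator $g \wedge [s(n)\in u]\wedge[s(n')\in u]$ only when $n$ and $n'$ do not already occur in $g$: if $n$ occurs in $g$ with open $u_n$, the merged open at index $n$ on the right is $u_n\wedge u$, and your new witness must satisfy $w_1\between u_n\wedge u$. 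You get $w_1\le u$ and $w_1>0$, but nothing forces $w_1\between u_n$; properness only yields $w_1\between w_2$ and $w_1\between v_i$ for the retained witnesses, and $w_1\between v_n$ together with $v_n\between u_n$ does not imply $w_1\between u_n$. The paper does the opposite: it first normalises so that $n,n'$ \emph{do} occur in the generator (padding with opens equal to $1$ if necessary) and then \emph{reuses} the existing witnesses $v_1,v_2$. Since $n_1,n_2\ge k_0$ forces $v_1,v_2\in{\downarrow}V_0$ and properness gives $v_1\between v_2$, one gets $v_1\vee v_2\le\st(v_1,V_0)\le w$ for some $w\in U_0$, whence $v_i\wedge(u_i\wedge w)=v_i\wedge u_i>0$, so the same witnesses remain valid for the refined opens. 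Your insertion of $W$ is the right move only for indices carrying no prior constraint; elsewhere you must keep the old witnesses. (A smaller omission: the case $n=n'$ needs its own variant, as in the paper's relation (iii').)

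Two secondary issues. First, $[-\in F]$ is \emph{not} a frame homomorphism $\O X\to\O\Cvar X$: it preserves finite meets by axiom (ii) of \cref{prop:completion_via_filters} but certainly not arbitrary joins (that would make every point of $\Cvar X$ completely prime). What the topology relations actually need is only that $v\between\bigvee_\alpha w_\alpha$ implies $v\between w_\alpha$ for some $\alpha$, so that a left-hand witness can be reused on an appropriate disjunct; that part is fine, but your stated justification is wrong. Second, ``monotonicity is straightforward from inspection'' hides the one nontrivial case: a generator can sit below one mentioning \emph{additional} sequence indices with open $1$ (since $[s(n)\in 1]=1$), and then a witness at the new index must be manufactured. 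This requires observing that the nonzero elements of $\bigcap_j{\downarrow}V_j$ form a uniform cover and invoking axiom (iv) of \cref{prop:completion_via_filters} to get $\bigvee\{[v\in F]\mid v\in\bigcap_j{\downarrow}V_j,\ v>0\}=1$; this is where the paper spends most of its monotonicity argument, and it does not follow from anything you cite.
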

\begin{proof}
 We want to make use of the coverage theorem (\cref{thm:sup_coverage}). The presentation of $\O\ModCauchy(X)$ in \cref{def:modcauchy} is not of the correct form, so we start by replacing it with an equivalent presentation that is.
 
 Our original presentation can thought of as describing $\ModCauchy(X)$ as a sublocale of the product of a locale $X^\N$ of sequences and a locale of left-total relations from $\B$ to $\N$.
 Let us first focus on the case of $X^\N$, which involves the generators $[s(n) \in u]$. Instead of using generators $[s(n) \in u]$ for all $u \in \O X$ we use generators $[s(\vec{n}) \in \vec{u}]$ where $\vec{n}$ denotes a finite list of distinct natural numbers, and $\vec{u}$ is a list of opens $u_i > 0$ of the same length. In $\O X^\N$ we will have $[s(\vec{n}) \in \vec{u}] = \bigwedge_i [s(n_i) \in u_i]$ where $i$ ranges over the list indices. We preorder these generators by \[ [s(\vec{n}) \in \vec{u}] \le [s(\vec{n}') \in \vec{u}'] \iff \forall i'.\ u'_{i'} = 1 \lor \exists i.\ n_i = n'_{i'} \land u_i \le u'_{i'}. \]
 As for the relations, we take the necessary ``$R_1$'' relations from \cref{thm:sup_coverage} in addition to ``$R_2$'' relations of the form \[ [s(\vec{n}) \in u_1,\dots, \bigvee\nolimits_\alpha u_\iota^\alpha,\dots,u_\ell] \le \bigvee\nolimits_\alpha [s(\vec{n}) \in u_1,\dots,u_\iota^\alpha,\dots,u_\ell] \]
 for each $\iota$ and each nontrivial join $\bigvee_\alpha u^\alpha_\iota$ in $\O X$. It is not hard to show that this indeed gives a presentation for $\O X^\N$ since the elements $\bigwedge_i [s(n_i) \in u_i]$ (with distinct $n_i$'s and each $u_i > 0$) form a base (see \cite[Proposition 2.3.7]{henry2016corrected}).
 
 The original presentation of the locale of left-total relations from $\B$ to $\N$ uses generators $[m(U) = k]$. We can easily obtain a presentation of the correct form by instead freely generating a $\wedge$-semilattice from this set of generators to obtain generators $[m(\vec{U}) = \vec{k}] = \bigwedge_j [m(U_j) = k_j]$ (with an order induced by the $\wedge$-semilattice structure). The ``$R_1$'' relations simply impose that the finite meets are respected, while for the ``$R_2$'' relations we have \[ [m(\vec{U}) = \vec{k}] \le \bigvee_{k_0 \in \N} [m(U_0,\vec{U}) = k_0,\vec{k}],\]
 so as to give left totality and force the necessary condition on $R_2$ relations to hold.
 
 Combining these we arrive at an appropriate presentation for $\ModCauchy(X)$. The generators are $[s(\vec{n}) \in \vec{u}] \wedge [m(\vec{U}) = \vec{k}]$ with the preorder induced by the product of the two preordered sets of generators from above. The ``$R_2$'' relations are then
 \begin{enumerate}
  \item $[s(\vec{n}) \in u_1,\dots,\bigvee\nolimits_\alpha u_\iota^\alpha,\dots] \wedge [m(\vec{U}) = \vec{k}] \le \bigvee_\alpha [s(\vec{n}) \in u_1,\dots,u_\iota^\alpha,\dots] \wedge [m(\vec{U}) = \vec{k}]$,
  \item $[s(\vec{n}) \in \vec{u}] \wedge [m(\vec{U}) = \vec{k}] \le \bigvee_{k_0 \in \N} [s(\vec{n}) \in \vec{u}] \wedge [m(U_0,\vec{U}) = k_0,\vec{k}]$,
  \item $[s(\vec{n}) \in \vec{u}] \wedge [m(U_0,\vec{U}) = k_0,\vec{k}] \le \bigvee_{\substack{w \in U_0 \\ \mathclap{w \between u_1,u_2}}} \, [s(\vec{n}) \in u_1 \wedge w, u_2 \wedge w, \vec{u}_{3\dots\ell}] \wedge [m(\vec{U}) = \vec{k}]$ \\
  for $n_1,n_2 \ge k_0$,
  \item[iii')] $[s(\vec{n}) \in \vec{u}] \wedge [m(U_0,\vec{U}) = k_0,\vec{k}] \le \bigvee_{\substack{w \in U_0 \\ \mathclap{w \between u_1}}} \, [s(\vec{n}) \in u_1 \wedge w, \vec{u}_{2\dots\ell}] \wedge [m(\vec{U}) = \vec{k}]$ \\
  for $n_1 \ge k_0$.
 \end{enumerate}
 For the condition (iii) we have restricted to the case where $n = n_1$ and $n' = n_2$ without loss of generality, since we may add in $u,u' = 1$ as necessary if $n,n' \notin \vec{n}$ or rearrange the indices to move $n$ and $n'$ to the front. This only fails if $n = n'$, in which case we instead use condition (iii').
 Also note that we can require the $w$'s in the join to satisfy $w \between u_1,u_2$, since the opens $[s(\vec{n}) \in \vec{v}]$ with each $v_i > 0$ form a base for $X^\N$.
 
 Again, it is not difficult to see that this indeed gives a presentation and that it is of the correct form to apply \cref{thm:sup_coverage}.
 So to show $q_\#$ to be well-defined we must prove it is monotone with respect the generators and that the relations (i), (ii), (iii) and (iii') are preserved.
 
 For monotonicity, first note that since $q_\#$ manifestly does not depend on the order of the lists, we may assume a generic inequality of generators to be of the form
 \[ [s(\vec{n}^1,\vec{n}^2) \in \vec{u}^1, \vec{u}^2] \wedge [m(\vec{U}^1,\vec{U}^2) = \vec{k}^1,\vec{k}^2] \le [s(\vec{n}^1,\vec{n}^4) \in \vec{u}^3, \vec{1}] \wedge [m(\vec{U}^1) = \vec{k}^1], \]
 where $u^1_i \le u^3_i$ for each $i$. To show $q_\#$ preserves this inequality, we consider a term in the join in the definition of $q_\#([s(\vec{n}^1,\vec{n}^2) \in \vec{u}^1, \vec{u}^2] \wedge [m(\vec{U}^1,\vec{U}^2) = \vec{k}^1,\vec{k}^2])$ from \cref{def:triquotiency}. Explicitly, we take $(V_{j^{1,2}})_{j^{1,2}}$ such that each $(V_{j^{1,2}})^\star \le U_{j^{1,2}}$ and $(v_{i^{1,2}})_{i^{1,2}}$ such that each $v_{i^{1,2}} \in \bigcap\{{\downarrow} V_{j^{1,2}} \mid k_{j^{1,2}} \le n_{i^{1,2}}\}$ and $v_{i^{1,2}} \between u_{i^{1,2}}$. Here we are using the convention that $i^1$ ranges over indices of $\vec{n}^1$, $i^2$ over $\vec{n}^2$, and $i^{1,2}$ over both (and similarly for $j^{1,2}$).
 We need to show that $\bigwedge_{i^{1,2}} [v_{i^{1,2}} \in F] \le q_\#([s(\vec{n}^1,\vec{n}^4) \in \vec{u}^3, \vec{1}] \wedge [m(\vec{U}^1) = \vec{k}^1])$. Let us consider the same $V_{j^1}$'s in the join for $q_\#$ on the right-hand side. We similarly take the same $v_{i^1}$'s. (Then $v_{i^1} \in \bigcap\{{\downarrow} V_{j^{1,2}} \mid k_{j^{1,2}} \le n_{i^1}\} \subseteq \bigcap\{{\downarrow} V_{j^1} \mid k_{j^1} \le n_{i^1}\}$, while $v_{i^1} \between u_{i^1}^1 \le u_{i^1}^3$ gives $v_{i^1} \between u_{i^1}^3$.)
 On the other hand, for the indices $i^4$ of $\vec{u}^4$ we take the join over \emph{all} $v_{i^4} \in \widetilde{V}_{i^4}$, where we define $\widetilde{V}_{i^4}$ to comprise the nonzero elements of $\bigcap\{{\downarrow} V_{j^1} \mid k_{j^1} \le n_{i^4}\}$. (We do, of course, have $v_{i^4} \between 1$, since $v_{i^4} > 0$.)
 Now we have
 \begin{align*}
  \bigwedge\nolimits_{i^{1,2}} [v_{i^{1,2}} \in F] &\le \bigwedge\nolimits_{i^1} [v_{i^1} \in F] \\
   &= \bigwedge\nolimits_{i^1} [v_{i^1} \in F] \,\wedge\, \bigwedge\nolimits_{i^4} \!\! \bigvee_{v_{i^4} \in \widetilde{V}_{i^4}}\!\! [v_{i^4} \in F] \\
   &= \!\!\! \bigvee_{(\vec{v}_{i^4})_{i^4} \in {\prod_{i^4} \widetilde{V}_{i^4}}} \!\!\! \bigwedge\nolimits_{i^{1,4}} [v_{i^{1,4}} \in F] \\
   &\le q_\#([s(\vec{n}^1,\vec{n}^4) \in \vec{u}^3, \vec{1}] \wedge [m(\vec{U}^1) = \vec{k}^1]),
 \end{align*}
 where the equality on the second line is by the Cauchyness axiom (iv) of \cref{prop:completion_via_filters}, which gives $\bigvee_{v_{i^4} \in \widetilde{V}_{i^4}} [v_{i^4} \in F] = 1$ (as $\widetilde{V}_{i^4}$ is a uniform cover).
 Thus, we have shown monotonicity.
 
 Next we will prove that $q_\#$ (as defined in \cref{def:triquotiency}) respects the relations.
 
 i) As above, we take $V_1, \dots, V_{\abs{\vec{k}}}$ such that each $V_j^\star \le U_j$ and $v_i$'s with $v_i \in \bigcap\{ {\downarrow} V_j \mid k_j \le n_i\}$ such that $v_i \between u_i$ for $i \ne \iota$ and $v_\iota \between \bigvee_\alpha u_\iota^\alpha$. Then $v_\iota \between u_\iota^\alpha$ for some $\alpha$. In the join on the right-hand side we consider the same $V_j$'s and the same $v_i$'s. It is then clear that $\bigwedge_i [v_i \in F]$ occurs in the join obtained by expanding $\bigvee_\alpha q_\#([s(\vec{n}) \in u_1,\dots,u_\iota^\alpha,\dots,u_\ell] \wedge [m(\vec{U}) = \vec{k}])$, as required.
 
 ii) We proceed in the same way as before, taking the same $v_i$'s and $V_j$'s on the right-hand side as on the left-hand side for $j > 0$. We then choose an arbitrary uniform cover $V_0$ with $V_0^\star \le U_0$. As long as we take $k_0$ larger than every $n_i$, we still have $v_i \in \bigcap\{ {\downarrow} V_j \mid k_j \le n_i\}$ on the right-hand-side, since the $j = 0$ case never actually occurs.
 
 iii) As we are used to by now, consider the $V_j$'s and $v_i$'s for the left-hand side and make the same choices for the right-hand side.
 It remains to choose a $w \in U_0$ that ensures we have $v_1 \between w \wedge u_1$ and $v_2 \between w \wedge u_2$ (and hence also $w \between u_1,u_2$).
 Suppose for now that $v_1 \between v_2$. Recall that $n_1,n_2 \ge k_0$ by assumption and so $v_1, v_2 \in {\downarrow} V_0$. Then $v_1 \vee v_2 \le \st(v_1, V_0) \in {\downarrow} V_0^\star \le U_0$. Thus, there is a $w \in U_0$ such that $w \ge v_1 \vee v_2$. Then for this $w$ and for $i \in \{1,2\}$, we have $v_i \wedge w \wedge u_i \ge v_i \wedge (v_1 \vee v_2) \wedge u_i = v_i \wedge u_i > 0$, since $v_i$ was chosen such that $v_i \between u_i$.
 
 The equality we are trying to prove is $\bigwedge_i [v_i \in F] \le \bigvee_{\substack{w \in U_0 \\ \mathclap{w \between u_1,u_2}}} \, q\#([s(\vec{n}) \in u_1 \wedge w, u_2 \wedge w, \vec{u}_{3\dots\ell}])$.
 On the left-hand side we have $\bigwedge_i [v_i \in F] = [\bigwedge_i v_i \in F] = \bigvee\{[\bigwedge_i v_i \in F] \mid \bigwedge_i v_i > 0\}$ by the relations (ii) and (iii) of \cref{prop:completion_via_filters}. So to prove the inequality, we may assume $\bigwedge_i v_i > 0$, and hence in particular, $v_1 \between v_2$. This justifies the assumption above and so we have proved the claim.
 
 iii') This is just like case (iii) with $w \in U_0$ chosen so that $w \ge v_1$. The result follows.
\end{proof}

\begin{theorem}
 Let $X$ be a pre-uniform locale with base $\B$. The limit map $q\colon \ModCauchy(X) \to \Cvar X$ is a lower triquotient map, with triquotiency assignment $q_\#$.
\end{theorem}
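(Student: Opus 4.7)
The plan is to verify the two conditions defining a triquotiency assignment beyond being a suplattice homomorphism (already established in the previous lemma): namely $q_\#(1) = 1$ and Frobenius, $q_\#(a \wedge q^*(b)) = q_\#(a) \wedge b$. Since $q_\#(1) = q_\#(q^*(1)) = 1$ follows from $q_\# \circ q^* = \id$, I would first prove $q_\# \circ q^* = \id$ and then tackle Frobenius; both reduce to checking on generators by suplattice- or frame-linearity.

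\textbf{Step 1: $q_\# \circ q^* = \id$.} As the $[u \in F]$ join-generate $\O\Cvar X$, it suffices to show $q_\#(q^*[u \in F]) = [u \in F]$. For $\le$: expand $q^*[u \in F]$ by \cref{def:limit_map} and apply $q_\#$ to each summand. By \cref{def:triquotiency}, $q_\#([m(U) = k'] \wedge [s(k) \in v]) = \bigvee_{V^\star \le U} \bigvee_{w \in {\downarrow} V,\,w \between v} [w \in F]$ (for $v > 0$, $v \vartriangleleft_U u' \vartriangleleft u$, $k' \le k$). Given such a $w$, pick $v_0 \in V$ with $w \le v_0$; then $w \between v$ forces $v_0 \between v$, and $V^\star \le U$ produces $u_1 \in U$ with $v_0 \le \st(v_0, V) \le u_1$. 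Since $v \between v_0 \le u_1$, we have $u_1 \le \st(v, U) \le u'$, so $w \le u' \le u$ and $[w \in F] \le [u \in F]$. For $\ge$: use regularity to write $[u \in F] = \bigvee_{b \vartriangleleft u} [b \in F]$, and for each nonzero $b$ interpolate $b \vartriangleleft_U u_1 \vartriangleleft u$. Pick any $V \in \B$ with $V^\star \le U$; Cauchyness gives $[b \in F] = \bigvee_{w \in V} [b \wedge w \in F] \le \bigvee_{w \in V,\,w \between b} [w \in F]$, which is a summand of $q_\#([m(U) = 0] \wedge [s(0) \in b])$, itself dominated by $q_\# q^* [u \in F]$.

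\textbf{Step 2: Frobenius.} Both sides are joins in $a$ and in $b$, so it suffices to check $q_\#(a \wedge q^*[u_0 \in F]) = q_\#(a) \wedge [u_0 \in F]$ for $a = [s(\vec n) \in \vec u] \wedge [m(\vec U) = \vec k]$ a generator. Expanding $q^*[u_0 \in F]$ and rewriting each $a \wedge [m(U_0) = k_0'] \wedge [s(k_0) \in v]$ (with $k_0 \notin \vec n$, arranged by taking $k_0$ large) as the extended generator $[s(\vec n, k_0) \in \vec u, v] \wedge [m(\vec U, U_0) = \vec k, k_0']$, the left-hand side is a join indexed by $(U_0, v \vartriangleleft_{U_0} u' \vartriangleleft u_0, k_0' \le k_0, \vec V, V_0, \vec w, w_0)$. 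For $\le$, each summand $\bigwedge_i [w_i \in F] \wedge [w_0 \in F]$ satisfies $\bigwedge_i [w_i \in F] \le q_\#(a)$ (the additional $V_0$-constraint only restricts the join further) and $[w_0 \in F] \le [u_0 \in F]$ by the argument of Step~1, so the inequality holds. For $\ge$, fix a term $\bigwedge_i [w_i^R \in F] \wedge [u_0 \in F]$ of the right-hand side coming from a choice $\vec V^R, \vec w^R$ for $q_\#(a)$. Replace $[u_0 \in F]$ via regularity by $[b \in F]$ with $b \vartriangleleft_{U_0} b' \vartriangleleft u_0$. Since $\B$ is a filter base and $V \le V'$ implies $V^\star \le V'^\star$, choose $V_0 \in \B$ refining each $V_j^R$ and satisfying $V_0^\star \le U_0$. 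Taking $k_0' > \max_i n_i$ and $k_0 > \max_j k_j$ makes the constraints on $w_i$ match those of $q_\#(a)$, so $\vec w = \vec w^R$ is admissible. Then Cauchyness yields $[b \in F] \le \bigvee_{w_0 \in V_0,\,w_0 \between b} [w_0 \in F]$, each such $w_0$ lies in $\bigcap_j {\downarrow} V_j^R \cap {\downarrow} V_0$ (since $V_0$ refines each $V_j^R$), and we obtain $\bigwedge_i [w_i^R \in F] \wedge [b \in F] \le$ LHS; joining over $b \vartriangleleft u_0$ completes the argument.

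\textbf{Main obstacle.} The principal difficulty lies in the bookkeeping of Step~2: one must manage possible index collisions between the "old" parameters $\vec n, \vec k, \vec U$ from $a$ and the "new" ones $k_0, k_0', U_0$ from $q^*[u_0 \in F]$, and disentangle the $w_i$-constraints from the $w_0$-constraint so that the join over $(U_0, v, u', k_0', k_0, V_0, w_0)$ effectively collapses to give $[u_0 \in F]$. The crucial tactical moves — taking $k_0', k_0$ sufficiently large to decouple constraints, and choosing $V_0$ as a common $\B$-refinement of the $V_j^R$ that still satisfies $V_0^\star \le U_0$ — are individually simple but must be coordinated.
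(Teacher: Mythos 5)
Your proof is correct and follows essentially the same route as the paper's: reduce to generators, show $q_\# q^*([u \in F]) \le [u \in F]$ via $w_0 \le \st(v,V_0) \le \st(v,U_0) \le u' \le u$, and establish the remaining Frobenius inequality by taking $k_0'$ larger than every $n_i$, absorbing the new index into an extended generator, and invoking Cauchyness and regularity. The only differences are cosmetic: you prove more than is needed (the paper derives $q_\#(a \wedge q^*(b)) \le q_\#(a) \wedge b$ immediately from monotonicity together with $q_\# q^* \le \id$, so your Step~1 ``$\ge$'' and Step~2 ``$\le$'' are redundant --- and the latter tacitly assumes $k_0 \notin \vec{n}$, which you cannot arrange for every term of the join in that direction, though the bound still holds there), while your explicit choice of $V_0$ refining each $V_j^R$ makes precise a point the paper glosses over.
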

\begin{proof}
 Certainly, $q_\#(1) = 1$. We may check the Frobenius condition $q_\#(a \wedge q^*(b)) = q_\#(a) \wedge b$ on a base. It is equivalent to show $q_\#(a \wedge q^*(b)) \ge q_\#(a) \wedge b$ and $q_\#q^*(b) \le b$ and we will do this for $a = [s(\vec{n}) \in \vec{u}] \wedge [m(\vec{U}) = \vec{k}]$ and $b = [u \in F]$.
 
 First note that
 \begin{align*}
  q_\#(q^*([u \in F])) &= \bigvee_{U \in \B \vphantom{k'}} \bigvee_{\;\substack{v \vartriangleleft_U u' \vartriangleleft u \vphantom{k'} \\ v > 0}} \bigvee_{\; k' \le k \in \N} q_\#([s(k) \in v] \wedge [m(U) = k']) \\
    &= \bigvee_{U \in \B \vphantom{k'}} \bigvee_{\;\substack{v \vartriangleleft_U u' \vartriangleleft u \vphantom{k'} \\ v > 0}} \bigvee_{\; k' \le k \in \N} \bigvee_{\substack{V \in \B \\ V^\star \le U}}\, \bigvee_{\substack{v' \in V \\ v' \between v}} [v' \in F].
 \end{align*}
 (Here we have assumed $v > 0$ in the join by writing $v$ as a join of such nonzero elements and using that $v \mapsto [s(k) \in v]$ preserves joins.)
 So to show $q_\#q^*([u \in F]) \le [u \in F]$ we take $U \in \B$, $v \vartriangleleft_U u' \vartriangleleft u$ with $v > 0$, $k' \le k \in \N$, $V \in \B$ with $V^\star \le U$ and $v' \in V$ with $v' \between v$, and show that $[v' \in F] \le [u \in F]$.
 Since $v' \between v$ and $v' \in V$, we have $v' \le \st(v, V) \le \st(v, U) \le u' \le u$. So by axiom (ii) of \cref{prop:completion_via_filters}, we find that $[v' \in F] \le [u \in F]$, as required.
 
 Next we show that $q_\#([s(\vec{n}) \in \vec{u}] \wedge [m(\vec{U}) = \vec{k}]) \wedge [u \in F] \le q_\#([s(\vec{n}) \in \vec{u}] \wedge {[m(\vec{U}) = \vec{k}]} \wedge q^*([u \in F]))$.
 Consider basic uniform covers $V_1,\dots,V_{\abs{\vec{k}}}$ with $V_j^\star \le U_j$ and a family of opens $(v_i)_i$ such that $v_i \in \bigcap\{ {\downarrow} V_j \mid k_j \le n_i\}$ and $v_i \between u_i$. We want that $[u \in F] \wedge \bigwedge_i [v_i \in F]$ is less than or equal to the right-hand side of the desired inequality.
 
 Expanding the right-hand side we have
 \begin{align*}
  \text{RHS}\, = \bigvee_{U \in \B \vphantom{k'}} \bigvee_{\; \substack{v \vartriangleleft_U u' \vartriangleleft u \vphantom{k'} \\ v > 0}} \bigvee_{\; k' \le k \in \N} q_\#( [s(k) \in v] \wedge [s(\vec{n}) \in \vec{u}] \wedge {[m(U,\vec{U}) = k',\vec{k}]} ).
 \end{align*}
 Take $k'$ strictly larger than each $n_i \in \vec{n}$, so that in particular, $k \notin \vec{n}$.
 Then the term $q_\#( [s(k) \in v] \wedge {[s(\vec{n}) \in \vec{u}]} \wedge {[m(U,\vec{U}) = k',\vec{k}]} )$ reduces to
 \begin{align*}
  q_\#( [s(k,\vec{n}) \in v,\vec{u}] \wedge {[m(U,\vec{U}) = k',\vec{k}]} ) &= \!\bigvee_{\substack{\vec{V} \in \B^{\abs{\vec{k}}+1} \\ V_j^\star \le U_j}}\, \bigvee_{\substack{\vec{v} \in (\cup \B)^{\abs{\vec{n}}+1} \\ v_i \in {\downarrow} V_j \text{ for } k_j \le n_i \\ v_i \between u_i}} \!\! \bigwedge\nolimits_{i \ge 0} [v_i \in F] \\
  &= \!\bigvee_{\substack{\vec{V} \in \B^{\abs{\vec{k}}+1} \\ V_j^\star \le U_j}}\, \bigvee_{\substack{\vec{v} \in (\cup \B)^{\abs{\vec{n}}+1} \\ v_i \in {\downarrow} V_j \text{ for } k_j \le n_i \\ v_i \between u_i}} \!\! [v_0 \in F] \wedge \bigwedge\nolimits_{i > 0} [v_i \in F],
 \end{align*}
 where we define $n_0 = k$, $u_0 = v$, $U_0 = U$ and $k_0 = k'$.
 Take the $V_j$'s and $v_i$'s in the join to be the same as above when $j > 0 $ and $i > 0$.
 Then we see
 \begin{align*}
  \text{RHS}\, &\ge \left( \bigvee_{U \in \B \vphantom{k'}} \bigvee_{\; \substack{v \vartriangleleft_U u' \vartriangleleft u \vphantom{k'} \\ v > 0}}\, \bigvee_{\substack{V_0 \in \B \vphantom{k'} \\ V_0^\star \le U}}\, \bigvee_{\substack{v_0 \in V_0 \vphantom{k'} \\ v_0 \between v}} [v_0 \in F] \right) \wedge \bigwedge_{i > 0} [v_i \in F].
 \end{align*}
 It now suffices to show that the bracketed expression lies above $[u \in F]$ (since we explicitly take the meet with $\bigwedge_{i > 0} [v_i \in F]$).
 First note that 
 \[\bigvee_{\substack{v_0 \in V_0 \\ v_0 \between v}} [v_0 \in F] \ge \!\!\bigvee_{\substack{v_0 \in V_0 \\ v_0 \between v}} [v \wedge v_0 \in F] = \!\!\bigvee_{v_0 \in V_0} [v \wedge v_0 \in F] = [v \in F] \wedge \!\! \bigvee_{v_0 \in V_0} [v_0 \in F] = [v \in F],\]
 by using (iii), (ii) and (iv) of \cref{prop:completion_via_filters} in turn.
 Then we find
 \begin{align*}
\bigvee_{U \in \B \vphantom{k'}} \bigvee_{\; \substack{v \vartriangleleft_U u' \vartriangleleft u \vphantom{k'} \\ v > 0}} \, \bigvee_{\substack{V_0 \in \B \vphantom{k'} \\ V_0^\star \le U}}\, \bigvee_{\substack{v_0 \in V_0 \vphantom{k'} \\ v_0 \between v}} [v_0 \in F]
  \ge \! \bigvee_{U \in \B \vphantom{k'}} \bigvee_{\; \substack{v \vartriangleleft_U u' \vartriangleleft u \vphantom{k'} \\ v > 0}} \!\! [v \in F]
  = \bigvee_{\substack{\mathclap{v \vartriangleleft u' \vartriangleleft u} \\ v > 0}}\, [v \in F]
  = [u \in F],
\end{align*}
by the regularity axiom (v) together with (iii).
 The result follows.
\end{proof}

Thus, we have obtained the completion of the (pre-)uniform locale $X$ as a quotient of the locale of modulated Cauchy sequences on $X$.
Note that the completion map $\gamma_X\colon X \to \Cvar X$ can also be expressed in these terms.
\begin{proposition}
 The completion map $\gamma_X\colon X \to \Cvar X$ is obtained as a composite of the constant sequence map $c: X \to \ModCauchy(X)$, defined by
 \[c^*([s(n) \in u] \wedge [m(U) = k]) = u,\]
 and the limit map $q\colon \ModCauchy(X) \to \Cvar X$.
\end{proposition}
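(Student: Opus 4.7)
The plan is to split the verification into two natural steps. First I would check that the stated formula actually defines a frame homomorphism $c^*\colon \O\ModCauchy(X) \to \O X$, and then I would compute the composite $c^* \circ q^*$ on the generators $[u \in F]$ of $\O\Cvar X$ and match it against $\gamma_X^*([u \in F]) = \bigvee_{v \vartriangleleft u} v$ as recorded in \cref{prop:completion_via_filters}.

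For well-definedness, I read the given formula as separately prescribing the values $c^*([s(n) \in u]) = u$ and $c^*([m(U) = k]) = 1$ on the two families of generators of $\O\ModCauchy(X)$. I then verify the three relations in \cref{def:modcauchy}: relation (i) holds because $c^*$ just returns the $u$-argument and the meets and joins in question exist in $\O X$; relation (ii) becomes the trivial $1 \le \bigvee_{k \in \N} 1$; and the Cauchyness relation (iii) reduces to $1 \le \bigvee_{u \in U} u$, which is just the cover condition on $U$.

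For the comparison, applying $c^*$ to the formula of \cref{def:limit_map} yields
\[
  c^*(q^*([u \in F])) = \bigvee_{U \in \B} \bigvee_{v \vartriangleleft_U u' \vartriangleleft u} \bigvee_{k' \le k \in \N} c^*([m(U) = k']) \wedge c^*([s(k) \in v]) = \bigvee_{U \in \B} \bigvee_{v \vartriangleleft_U u' \vartriangleleft u} v,
\]
and it remains to show this equals $\bigvee_{v \vartriangleleft u} v$. One direction is immediate from the transitivity of $\vartriangleleft$. For the converse, given $v \vartriangleleft u$, I interpolate to obtain $v \vartriangleleft u' \vartriangleleft u$, and since $\B$ is a base for $\U$ I refine a uniform cover witnessing $v \vartriangleleft u'$ to some $U \in \B$ so that $v \vartriangleleft_U u'$, which puts $v$ into the right-hand join.

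I do not anticipate any real obstacle — everything reduces to short calculations. The only mild subtleties are that the displayed definition of $c^*$ must be read as separable across its two families of generators in order to become a bona fide frame homomorphism, and the final interpolation-plus-base argument relating $\vartriangleleft$ to the stratified form $v \vartriangleleft_U u' \vartriangleleft u$ with $U \in \B$.
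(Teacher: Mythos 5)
Your proposal is correct and follows the same route as the paper: the paper simply asserts well-definedness of $c^*$ and computes $c^*q^*([u \in F]) = \bigvee_{U \in \B} \bigvee_{v \vartriangleleft_U u' \vartriangleleft u} v = \bigvee_{v \vartriangleleft u} v = \gamma_X^*([u \in F])$, while you fill in the details (checking the three relations of \cref{def:modcauchy} and justifying the middle equality via transitivity and interpolation of $\vartriangleleft$ together with $\B$ being a base). No gaps.
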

\begin{proof}
 First note that $c^*$ is indeed a well-defined frame homomorphism.
 We then have
 \begin{align*}
  c^*q^*([u \in F]) \,= \bigvee_{U \in \B \vphantom{k'}} \bigvee_{\; v \vartriangleleft_U u' \vartriangleleft u \vphantom{k'}} \bigvee_{\; k' \le k \in \N} v \,= \bigvee_{v \vartriangleleft u} v \,=\, \gamma^*([u \in F]),
 \end{align*}
 as required.
\end{proof}
We can also define a pre-uniform structure on $\ModCauchy(X)$ corresponding to the uniformity on $\Cvar X$.\footnote{Constructively, pre-uniform locales behave best when the underlying locale is overt. We can prove that $\ModCauchy(X)$ is overt by constructing an explicit left adjoint $\exists$ to ${!}^*\colon \O 1 \to \O\ModCauchy(X)$. We define $\exists([s(\vec{n}) \in \vec{u}] \wedge [m(\vec{U}) = \vec{k}]) = \llbracket \exists (w_{ii'}^j)_{jii'}.\, (\forall j.\, w_{ii'}^j \in U_j) \land (\forall i.\, (\bigwedge_{\!\!\!\substack{i',j \quad \\ k_j \le n_i,n_{i'}}} \hspace{-3ex} w_{i'i}^j \wedge w_{ii'}^j) \between u_i) \land \bigwedge_{\!\! \substack{i,j \ \\ k_j \le n_i}} \hspace{-1ex} w_{ii}^j > 0 \rrbracket$.}
\begin{definition}
 A natural uniformity on $\ModCauchy(X)$ is generated by covers of the form $\{q^*([u \in F]) \mid u \in U\} = \{ \bigvee_{U' \in \B \vphantom{k'}} \bigvee_{\; v \vartriangleleft_{U'} u' \vartriangleleft u \vphantom{k'}} \bigvee_{\; k' \le k \in \N} [m(U') = k'] \wedge [s(k) \in v] \mid u \in U \}$ for each basic uniform cover $U$ of $X$. This is the initial uniformity with respect to $q$.
\end{definition}

At this point we no longer need the original construction of the completion in terms of regular Cauchy filters. In particular, by the following lemma, $\Cvar X$ is the uniform reflection of $\ModCauchy(X)$.
\begin{lemma}
 Suppose $(X,\U)$ is a uniform locale and $e\colon Y \to X$ is a locale epimorphism. Equip $Y$ with the initial uniformity with respect to the forgetful functor $\PreUnifLoc \to \Loc$. Then $e\colon Y \to X$ is (up to isomorphism) the unit of the uniform reflection of $Y$.
\end{lemma}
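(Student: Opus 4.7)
The plan is to verify the universal property of the uniform reflection directly. Given any uniform locale $(Z, \V)$ and a pre-uniform morphism $g \colon (Y, e^*\U) \to (Z, \V)$, we need a unique pre-uniform morphism $\bar g \colon (X, \U) \to (Z, \V)$ with $\bar g \circ e = g$. Uniqueness is immediate: since $e$ is a locale epimorphism, $e^*$ is a monomorphism of frames, so any $\bar g^*$ satisfying $e^* \circ \bar g^* = g^*$ is determined.

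The crucial existence step is to show $g^*(\O Z) \subseteq e^*(\O X)$, which is where uniformity of $Z$ enters. For $w \in \O Z$, write $w = \bigvee \{w' \mid w' \vartriangleleft_Z w\}$; it suffices to produce, for each such $w'$, some $a \in \O X$ with $g^*(w') \le e^*(a) \le g^*(w)$. Given $w' \vartriangleleft_Z w$, pick $W \in \V$ with $\st(w', W) \le w$. Since $g$ is pre-uniform, $g^*[W] \in e^*\U$, so there is $U \in \U$ such that $e^*[U]$ refines $g^*[W]$; for each $u \in U$ there is $w_u \in W$ with $e^*(u) \le g^*(w_u)$. Letting $a = \bigvee \{u \in U \mid e^*(u) \le g^*(w)\}$, the inequality $e^*(a) \le g^*(w)$ is automatic. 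For the other direction, use $\bigvee e^*[U] = e^*(\bigvee U) = 1$ to rewrite $g^*(w') = \bigvee_{u \in U} g^*(w') \wedge e^*(u)$; whenever $g^*(w') \between e^*(u)$, one deduces $w' \between w_u$ (using $g^*(0) = 0$), hence $w_u \le \st(w', W) \le w$ and so $e^*(u) \le g^*(w)$, placing $u$ into the join defining $a$. Then $a^* := \bigvee \{a \in \O X \mid e^*(a) \le g^*(w)\}$ satisfies $e^*(a^*) = g^*(w)$.

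Setting $\bar g^*(w) := a^*$, the relation $e^* \circ \bar g^* = g^*$ together with injectivity of $e^*$ promotes $\bar g^*$ to a frame homomorphism: each frame identity for $\bar g^*$ follows by applying $e^*$ and invoking the corresponding identity for $g^*$. For pre-uniformity of $\bar g$, given $W \in \V$ pick $U \in \U$ with $e^*[U]$ refining $g^*[W] = e^*[\bar g^*[W]]$; since $e^*$ is an injective frame homomorphism it reflects order, so each $u \in U$ lies below some $\bar g^*(w)$ with $w \in W$. Hence $U$ refines $\bar g^*[W]$ and $\bar g^*[W] \in \U$.

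The principal obstacle is the sandwich argument in the middle paragraph: proving $g^*(w) \in e^*(\O X)$ for every $w \in \O Z$ requires all three hypotheses working in concert --- the uniformity of $Z$ supplying the approximants $w' \vartriangleleft_Z w$, the uniform continuity of $g$ comparing $g^*[W]$ with some $e^*[U]$, and the initial uniformity on $Y$ being generated by the covers $e^*[U]$. Once this factorisation is established, the remaining checks are formal.
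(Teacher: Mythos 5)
Your proof is correct (at least classically) but takes a genuinely different route from the paper's. You verify the universal property of the reflection directly: given a pre-uniform morphism $g\colon (Y,e^*\U)\to(Z,\V)$ with $Z$ uniform, you factor $g^*$ through the injective map $e^*$ by establishing the sandwich $g^*(w')\le e^*(a)\le g^*(w)$ for each $w'\vartriangleleft w$, with $a=\bigvee\{u\in U\mid e^*(u)\le g^*(w)\}$. The paper instead starts from the explicit description of the uniform reflection of $Y$ as the subframe $\{a\in\O Y\mid a=\bigvee_{b\vartriangleleft a}b\}$ and shows this subframe is exactly $e^*(\O X)$; the key step there is that $\st(b,e^*[U])=\bigvee\{e^*(w)\mid w\in U,\ b\between e^*(w)\}$ lies in the image of $e^*$, which is essentially the same mechanism as your construction of $a$ from the refining cover $e^*[U]$. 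Your version is longer but more self-contained, since it does not presuppose that the $\vartriangleleft$-regular elements form the reflection; the paper's is shorter because it leans on that standard fact. Two minor points to tidy up: the dichotomy ``either $g^*(w')\between e^*(u)$ or $g^*(w')\wedge e^*(u)=0$'' is an appeal to excluded middle (constructively one would route this through the positivity predicate of an overt locale, a point the paper itself does not dwell on), and in checking that $\bar g^*[W]$ lies in $\U$ you should note that each $\bar g^*(w)$ is nonzero, which follows from $e^*(\bar g^*(w))=g^*(w)>0$ together with injectivity of $e^*$.
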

\begin{proof}
 The uniform reflection of $Y$ is the subframe of $\O Y$ consisting of the elements $a \in \O Y$ such that $a = \bigvee_{b \vartriangleleft a} b$.
 Since $X$ is uniform, for every $u \in \O X$ we have $u = \bigvee_{v \vartriangleleft u} v$. Since $e^*$ is uniform, it preserves $\vartriangleleft$ and so $e^*(u) = \bigvee_{v \vartriangleleft u} e^*(v) \le  \bigvee_{e^*(v) \vartriangleleft e^*(u)} e^*(v) \le e^*(u)$. Thus, $e^*(\O X)$ is contained in the subframe corresponding to the uniform reflection of $Y$. 
 
 Conversely, consider an element $a$ of the uniform reflection of $Y$. Recall that ${b \vartriangleleft a}$ means $b \le \st(b,e^*[U]) \le a$ for some $U \in \U$. Hence, $a = \bigvee_{U \in \U}\bigvee_{b \vartriangleleft_{e^*[U]} a} \st(b,e^*[U])$. But $\st(b,e^*[U]) = \bigvee\{ e^*(w) \mid w \in U,\ b \between e^*(w) \}$ and is thus an element of $e^*(\O X)$. Consequently, $a$ is also an element of $e^*(\O X)$.
 Now, since $e^*$ is injective, the result follows.
\end{proof}

Alternatively, we can define $\Cvar X$ as the coequaliser of the kernel pair of $q^*$. This kernel pair can be specified without reference to $\Cvar X$ as the projections out of the sublocale of $\ModCauchy(X) \times \ModCauchy(X)$ cut out by the relations $\bigvee_{U \in \B \vphantom{k'}} \bigvee_{\; v \vartriangleleft_U u' \vartriangleleft u \vphantom{k'}} \bigvee_{\; k' \le k \in \N} \iota_1({[m(U) = k']} \wedge [s(k) \in v]) = \bigvee_{U \in \B \vphantom{k'}} \bigvee_{\; v \vartriangleleft_U u' \vartriangleleft u \vphantom{k'}} \bigvee_{\; k' \le k \in \N} \iota_2([m(U) = k'] \wedge [s(k) \in v])$ for $u \in \O X$, where $\iota_1$ and $\iota_2$ are the frame coproduct injections.

Finally, if we desire a new presentation of $\Cvar X$, the techniques of \cite{manuell2023presenting} can be used to obtain a presentation of $\Cvar X$ from that of $\ModCauchy(X)$ and the composite map $q^*q_\#\colon \O\ModCauchy(X) \to \O\ModCauchy(X)$, which sends $[s(\vec{n}) \in \vec{u}] \wedge [m(\vec{U}) = \vec{k}]$ to 
\[\bigvee_{\substack{\vec{V} \in \B^{\abs{\vec{k}}} \\ V_j^\star \le U_j}}\, \bigvee_{\substack{\vec{v} \in (\cup \B)^{\abs{\vec{n}}} \\ v_i \in {\downarrow} V_j \text{ for } k_j \le n_i \\ v_i \between u_i}} \bigvee_{U \in \B \vphantom{\bigwedge}} \bigvee_{\; w \vartriangleleft_U v' \vartriangleleft \,\bigwedge\nolimits_i \! v_i} \bigvee_{\; k' \le k \in \N \vphantom{\bigwedge}} [m(U) = k'] \wedge [s(k) \in w].\]
However, the resulting presentation is probably too complicated to be of much use.

\section{Discussion}

\subsection{Relation to the spatial completion}

It might seem puzzling how Cauchy sequences can work for pre-uniform locales, but apparently not for uniform spaces, which are a special case. Of course, in our approach the intermediate constructions involve locales instead of spaces, but it is helpful to consider the correspondence in more detail.

Firstly, note that while the completion of a spatial pre-uniform locale can fail to be spatial, its spatial coreflection always agrees with the classical completion of the corresponding uniform space. So our construction indeed recovers the correct uniform completion of a uniform space. On the other hand, the locale $\ModCauchy(X)$ will generally fail to be spatial, even if $X$ is. Its spatial coreflection indeed gives a space of (modulated) Cauchy sequences, and the corresponding quotient of this space is the sequential completion, not the full uniform completion. The resolution to this discrepancy is that the spatial coreflection \emph{does not preserve coequalisers}.
One might conclude that the failure of sequences to give the correct completion of uniform spaces is a pathology arising from the `taking points too early'.

Our approach also sheds light on some cases where the sequential completion \emph{does} work in the spatial setting. If $X$ is a Polish space and $\B$ is a countable base for a uniformity on $X$, then (assuming classical logic) $\ModCauchy(X)$ is spatial by \cite{heckmann2015}.
Then the spatial coequaliser agrees with the localic version and the sequential completion and full uniform completion coincide.

\subsection{Applications}

The motivation behind this result is to reconcile the sequential and filter approaches to completions. That these two approaches give the same result has a certain aesthetic charm and gives us license to not worry about there being a variant notion of completion in the pointfree setting.

Unfortunately, the construction via sequences is significantly more complicated than the standard construction using filters and so can hardly be preferred to construct completions in practice.
Nonetheless, it does have some potential uses by making contact with classical constructions that use sequences.
For example, if function into $\R$ is defined by power series or using some other kind of convergent sequence, we can define it in the pointfree setting by specifying a map into $\ModCauchy(\R)$ and composing with the limit map.

There is also some promise in being able to use similar definitions even in non-metric settings, where in point-set topology sequences are no longer of much value.

\subsection{Further prospects for Skolemisation}

The approach we used in \cref{sec:locale_of_cauchy_sequences} to define the locale of modulated Cauchy sequences seems like it could be very useful as a general technique to describe classifying locales for objects defined by logically complex formulae.
This is especially true when we proceed to quotient out the additional data, as we did here, but I imagine it could be helpful even if we are required to keep the additional data around.
A full discussion of this technique will be the topic of a later paper.

\bibliographystyle{abbrv}
\bibliography{references}

\end{document}